\newtheorem{thm}{Theorem}[section]
\newtheorem{prop}[thm]{Proposition}
\newtheorem{clm}[thm]{Claim}
\theoremstyle{definition}
\theoremstyle{remark}
\numberwithin{equation}{section}
\newcommand{\BibTeX}{B\kern-0.1emi\kern-0.017emb\kern-0.15em\TeX}
\newcommand{\XYpic}{$\mathrm{X\kern-0.3em\raisebox{-0.18em}{Y}}$-$\mathrm{pic}\,$}
\newcommand{\cl}{C \kern -0.1em \ell}  
\newcommand{\ed}{\end{document}}
\begin{document}

%
%
%
%
%
%
%
%
%

\title
 {Mutual-visibility and general position sets in Sierpi\'nski  triangle graphs}
\author[$\dagger$]{Danilo Kor\v ze}
%
\affil[$\dagger$]
{%
Faculty of Electrical Engineering and Computer Science\\
University of Maribor\\
Koro\v ska cesta 46\\
 SI-2000 Maribor, Slovenia }
%
%
\author[$\star$
]{Aleksander Vesel}
\affil[$\star$]{%
Faculty of Natural Sciences and Mathematics\\
University of Maribor\\
Koro\v ska cesta 160\\
 SI-2000 Maribor, Slovenia}

%
%
\date{\today}
\maketitle

\begin{abstract}
For a given graph \(G\), the general position problem asks for the largest set of vertices \(M \subseteq V(G)\) such that no three distinct vertices of \(M\) belong to a common shortest path in \(G\). A relaxation of this concept is based on the condition that two vertices \(x, y \in V(G)\) are \(M\)-visible, meaning there exists a shortest \(x, y\)-path in \(G\) that does not pass through any vertex of \(M \setminus \{x, y\}\). 

If every pair of vertices in \(M\) is \(M\)-visible, then \(M\) is called a mutual-visibility set of \(G\). The size of the largest mutual-visibility set of \(G\) is called the mutual-visibility number of \(G\). Some well-known variations of this concept consider the total, outer, and dual mutual-visibility sets of a graph.

We present results on the general position problem and the various mutual-visibility problems in Sierpiński triangle graphs.

\end{abstract}
\label{page:firstblob}

\section{Introduction and preliminaries}
Motivated by the well-known no-three-in-line problem, which challenges one to select the maximum number of points from an \( n \times n \) grid such that no three points are collinear \cite{Dudeney}, the idea of a general position set in a graph has been introduced \cite{Manuel, Chandran}.

This problem has incited studies of the largest general position sets in various families of graphs; for an extended list of references, see, for example, \cite{Klavzar}. Moreover, it has been observed that a similar concept, known as mutual-visibility sets, has been studied in a variety of contexts, including wireless sensor networks, mobile robot networks, and distributed computing \cite{Bhagat, Cicerone4, DiLuna, Poudel}. This relaxation of the general position problem, with the objective of maximizing the size of the largest mutual-visibility set—i.e., a set of vertices in a graph such that any two vertices in the set are mutually visible—was formally introduced in graph theory by Di Stefano \cite{DiS} in 2022. The problem is known to be NP-hard and has already been studied in various classes of graphs (see, for example, the references in \cite{vesel}).

Some results on visibility problems in graphs demonstrate that specific adjustments to visibility properties can be significant. Consequently, a range of novel mutual-visibility problems, including the total mutual-visibility problem, the dual mutual-visibility problem, and the outer mutual-visibility problem, have been proposed \cite{Cicerone} and later studied for several classes of graphs (see, for instance, \cite{Axenovich, Bujtas, kuziak, Tian}).

Inspired by the famous Sierpi\'nski triangle fractal, several intriguing families of graphs have been introduced, with the most prominent being the Sierpi\'nski graphs, the general Sierpi\'nski graphs, and the Sierpi\'nski triangle graphs. Interest in these graphs spans a variety of fields, including games like the Chinese Rings and the Tower of Hanoi, as well as areas such as topology, physics, and the study of interconnection networks, among others. For an extensive survey and classification of these so-called Sierpi\'nski-type graphs, interested readers are referred to \cite{hinz}.

Several properties of Sierpi\'nski triangle graphs have been studied, including the chromatic number, domination number, and pebbling number \cite{Tegola}; labeling and coloring \cite{KlavzarG}; vertex-colorings, edge-colorings, and total-colorings \cite{Jakovac2}; a 2-parametric generalization \cite{Jakovac}; as well as packing coloring \cite{packing}.

In this paper, we report on research into mutual-visibility and general position in Sierpi\'nski triangles. In Section 2, we present definitions and results crucial for the remainder of the paper, as well as two mathematical optimization techniques used to provide upper bounds on mutual-visibility set problems in the graphs of interest. In Section 3, we present the mutual-visibility numbers and general position numbers of Sierpi\'nski triangle graphs. The paper concludes with Section 4, where we determine variations of the mutual-visibility number of Sierpi\'nski triangle graphs.

\section{Preliminaries}

\subsection{Basic definitions}

If $G = (V(G),E(G))$
is a graph, then $M \subseteq V (G)$ is a {\em general position set} if for every triple of vertices 
$u,v,w \in V(G)$ we have $d_G(u, v) \not = d_G(u,w) + d_G(w,v)$, where $d_G(u,v)$ (or simply $d(u,v)$) denotes 
the  length of a shortest $u,v$-path (a path between $u$ and $v$) in $G$. 
The cardinality of a largest general position set of in $G$ is called the {\em general position number} of $G$ and denoted as $gp(G)$. 

Let $G = (V(G),E(G))$ be a graph, $M \subseteq V(G)$ and $u,v \in V(G)$.
We say that a  $u,v$-path $P$ is {\em $M$-free}, 
 if $P$ does not contain a vertex of $M \setminus \{u, v \}$.
Vertices $u, v \in V(G)$ are {\em $M$-visible}  if $G$ admits an $M$-free shortest $u,v$-path.
 
Let $M \subseteq V(G)$ and $\overline M=V(G) \setminus M$. Then we say that $M$ is a

 \begin{itemize}
  \item \emph{mutual-visibility} set, if every $u, v \in M$ are $M$-visible,

 \item \emph{total mutual-visibility} set, if every $u, v \in V(G)$ are $M$-visible,

 \item \emph{outer mutual-visibility} set, if every $u, v \in M$ are $M$-visible, and every $u \in M$, $v \in \overline M$ are $M$-visible,

\item \emph{dual mutual-visibility} set, if every $u, v \in M$ are $M$-visible, and every $u, v \in \overline M$ are $M$-visible.
 \end{itemize}

The cardinality of a largest mutual-visibility set, a largest total mutual-visibility set, a largest outer mutual-visibility set, and a largest dual mutual-visibility set will be respectively denoted by $\mu(G)$, $\mu_t(G)$, $\mu_o(G)$, and $\mu_d(G)$. 
The corresponding graph invariants will be respectively called the {\em mutual-visibility number}, the {\em total mutual-visibility number}, the {\em outer mutual-visibility number}, and the {\em dual mutual-visibility number} of $G$. 

Since every general position set of $G$ is obviously a mutual-visibility set of $G$, it holds that $gp(G) \le \mu(G)$ for every graph $G$. 
Moreover, every variant of a mutual-visibility set is itself a mutual-visibility set, it follows 
that $\mu_t(G) \le \mu(G)$,  $\mu_o(G) \le \mu(G)$ and $\mu_d(G) \le \mu(G)$ for every graph $G$. 

The set of vertices lying on all shortest $u, v$-paths is called
the {\em interval} between $u$ and $v$ and denoted by $I_{G}(u, v)$ 
 We will also write $I(u, v)$ when $G$ will be clear from the context.


For a positive integer $n$  we will use the notation $[n] = \{1, 2, \ldots, n\}$ and $[n]_0 = \{0, 1,  \ldots, n - 1\}$.

\subsection{Sierpi\'nski  triangle graphs}
The  {\em base-3 Sierpi\'{n}ski graphs $S^n$}  are defined such that we start with $S^0 = K_1$. 
For $n \ge 1$, the vertex set of $S^n$ is $[3]_0^n$ and the edge set is defined recursively as

$$E(S^n) = \{\{is, i t\} : i \in [3], \{s, t\} \in E(S^{n-1})\} \cup
\{\{i j^{n-1}, ji^{n-1} \} \, | \,  i, j \in [3], i\not= j \}.$$ 

Obviously, for $n\ge 1$,  $S^n$ can be constructed from three copies of  $S^{n-1}$. In left part of Fig. \ref{trans}, for example,  we can see  $S^2$ which 
consists of three copies of $S^1$.

For $n\ge 1$, the base-3 Sierpi\'{n}ski graphs $S^n$ belong 
to  a larger class of graphs that are known   
seen as generalized Sierpi\'{n}ski graphs  where $G=K_3$ \cite{hinz}. 

Let $n$ be a nonnegative integer. The class of the {\em Sierpi\'nski triangle graphs} $ST_3^n$ is obtained from 
$S^{n+1}$ by contracting all  non--clique edges. 
This transformation can be observed in Fig. \ref{trans}, where 
the graphs $S^2$ and $ST_3^1$ are depicted. 
Note that every Sierpi\'nski triangle graph $ST_3^n$ contains exactly three 
vertices of degree 2 that are called the \textit{extreme vertices} of  $ST_3^n$ and denoted $X(ST_3^n)$.
All other vertices of  $ST_3^n$ are called the \textit{non-extreme vertices}
of  $ST_3^n$.

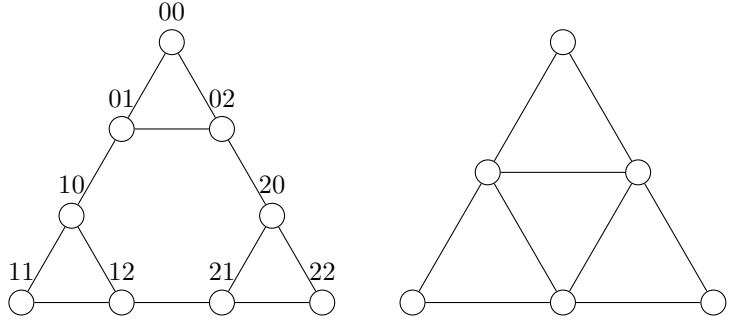
\begin{figure}[bt] 
\centering

\begin{tikzpicture}[scale=0.4]

\node[black, circle, draw] (p1) at (5/3*3,3*8.66/3) [label=above:$00$] {};
\node[black, circle, draw] (p2) at (5/3*2,2*8.66/3) [label=above:$01$] {};
\node[black, circle, draw] (p3) at (5/3*4,2*8.66/3) [label=above:$02$]  {};
\node[black, circle, draw] (p4) at (5/3*1,1*8.66/3) [label=above:$10$] {};
\node[black, circle, draw] (p5) at (0,0) [label=above:$11$] {};
\node[black, circle, draw] (p6) at (5/3*2,0) [label=above:$12$]  {};
\node[black, circle, draw] (p7) at (5/3*5,1*8.66/3) [label=above:$20$] {};
\node[black, circle, draw] (p8) at (5/3*4,0) [label=above:$21$] {};
\node[black, circle, draw] (p9) at (5/3*6,0) [label=above:$22$]  {};

\draw   (p1) -- (p2) -- (p3) -- (p1);
\draw   (p4) -- (p5) -- (p6) -- (p4);
\draw   (p7) -- (p8) -- (p9) -- (p7);
\draw   (p2) -- (p4);
\draw   (p3) -- (p7);
\draw   (p6) -- (p8);

\node[black, circle, draw] (q1) at (8+5/2*4,2*8.66/2)  {};
\node[black, circle, draw] (q2) at (8+5/2*3,1*8.66/2)  {};
\node[black, circle, draw] (q3) at (8+5/2*5,1*8.66/2)  {};
\node[black, circle, draw] (q4) at (8+5/2*2,0)  {};
\node[black, circle, draw] (q5) at (8+5/2*4,0)  {};
\node[black, circle, draw] (q6) at (8+5/2*6,0)  {};

\draw   (q1) -- (q2) -- (q3) -- (q1);
\draw   (q2) -- (q4) -- (q5) -- (q2);
\draw   (q3) -- (q5) -- (q6) -- (q3);
\end{tikzpicture}
\caption{Base-3 Sierpi\'{n}ski graph $S^2$ (left) and 
Sierpi\'nski triangle graph $ST_3^1$ (right)} \label{trans}
\end{figure}

There are various other definitions of Sierpi\'nski triangle graphs, all based on the fact that their drawings in the plane represent approximations of the Sierpi\'nski triangle fractal (see \cite{hinz}). More intuitively, Sierpi\'nski triangle graphs can be constructed iteratively. 

We start with a complete graph on 3 vertices, i.e., $ST_3^0$ is the triangle $K_3$. Now, assume that $ST_3^{n}$ is already constructed. To form $ST_3^{n+1}$, it is composed of three copies of $ST_3^{n}$, arranged in a way illustrated in Fig. \ref{properfig}, where $ST_3^2$ is composed of three copies of $ST_3^1$. Note that an extreme vertex of one copy of $ST_3^1$ is identified with an extreme vertex of another copy (this procedure is performed for exactly two extreme vertices of each copy).

Distances in Sierpi\'nski triangle graphs were studied in \cite{zemlja}.
We will need the following result. 

\begin{prop} \label{razdalje}
Let $n \ge 0$ be an integer. If $u,v \in V(ST_3^{n})$, then 
$d(u,v) \le 2^n$. Moreover, if $u$ and $v$ are extreme vertices of $ST_3^{n}$, then $d(u,v) = 2^n$.
\end{prop}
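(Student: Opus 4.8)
The plan is to prove both assertions simultaneously by induction on $n$, exploiting the recursive construction of $ST_3^{n+1}$ from three copies of $ST_3^{n}$. The base case $n=0$ is immediate: $ST_3^{0}=K_3$, every pair of vertices is at distance $1=2^{0}$, and all three vertices are extreme, so both the bound and the equality hold.

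For the inductive step I would fix the decomposition of $ST_3^{n+1}$ into three copies $A,B,C$ of $ST_3^{n}$, recording that $A\cap B$, $A\cap C$, and $B\cap C$ each consist of a single glued extreme vertex — call these $p$, $q$, and $r$ respectively — while the remaining three extreme vertices of the copies, say $a\in A$, $b\in B$, $c\in C$, are exactly the extreme vertices $X(ST_3^{n+1})$. The key structural fact I would isolate first is that the copies meet only in $\{p,q,r\}$, so any path that leaves a copy must pass through one of these gluing vertices, and the neighbours of a non-gluing vertex of a copy all lie in that same copy.

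For the upper bound I would argue that every two vertices $u,v$ lie in copies that share a gluing vertex, since any two of the three copies do. Routing $u$ to that shared gluing vertex inside its copy and then on to $v$ inside the other copy, and using the inductive hypothesis that any two vertices of a single copy are at distance at most $2^{n}$ on each leg, yields a walk of length at most $2^{n}+2^{n}=2^{n+1}$; if $u,v$ already lie in a common copy the sharper bound $d(u,v)\le 2^{n}$ applies. Hence $d(u,v)\le 2^{n+1}$ for all pairs.

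The substance is the lower bound for extreme vertices, for instance showing $d(a,b)\ge 2^{n+1}$. I would take a shortest $a,b$-path $P$, let $x$ be the first gluing vertex it meets and $y$ the last gluing vertex before $b$. Because $a$ lies only in $A$ and leaving $A$ forces passing through a gluing vertex, the initial segment of $P$ up to $x$ stays inside $A$ and ends at a gluing vertex of $A$, so $x\in\{p,q\}$; as $a$ and $x$ are both extreme vertices of $A$, the inductive equality gives this segment length $\ge 2^{n}$. Symmetrically the final segment from $y$ to $b$ lies in $B$ with $y\in\{p,r\}$ extreme in $B$, contributing $\ge 2^{n}$. Since these two segments are internally disjoint and occur in order along $P$, the total length is at least $2^{n+1}$, matching the upper bound and forcing equality. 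The main obstacle — and the step demanding care — is exactly this path-decomposition argument: one must verify that the ``first gluing vertex'' and ``last gluing vertex'' cleanly split $P$ into a leg contained in $A$ and a leg contained in $B$ whose lengths are separately governed by the inductive hypothesis, rather than the shortest path weaving between copies in a way that could evade the bound.
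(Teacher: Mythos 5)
Your proof is correct. Note first that the paper itself gives no proof of Proposition \ref{razdalje}: the result is quoted from the literature on metric properties of Sierpi\'nski triangle graphs \cite{zemlja}, so your argument is a self-contained substitute rather than a variant of an internal one. Your induction rests on exactly the two structural facts that the paper's iterative construction supplies: every edge of $ST_3^{n+1}$ lies inside one of the three copies $A,B,C$ of $ST_3^n$, so a path can change copies only at one of the three gluing vertices $p,q,r$; and each gluing vertex is an extreme vertex of the two copies containing it, while $X(ST_3^{n+1})$ consists of the three unglued extreme vertices $a,b,c$. Given these, both legs of your argument go through: the upper bound by routing through the gluing vertex shared by the two relevant copies (at most $2^n+2^n$ by the inductive hypothesis), and the lower bound by splitting a shortest $a,b$-path at the first gluing vertex $x$ and the last gluing vertex $y$ it meets. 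The segment from $a$ to $x$ is a path in $A$ joining two extreme vertices of $A$, the segment from $y$ to $b$ is (reading the path backwards from $b$, which stays in $B$ until it first meets a gluing vertex) a path in $B$ joining two extreme vertices of $B$, and since $x$ precedes $y$ along the path the two segments share at most the single vertex $x=y$; the inductive equality then gives length at least $2^n$ for each, hence $2^{n+1}$ in total, matching the upper bound. The one step genuinely demanding care — ruling out a shortest path that weaves between copies so as to shorten either leg — is precisely the one you addressed with the first/last gluing vertex decomposition, so the argument is complete.
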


Let $n> m \ge k \ge 0$ be integers. We say that a copy of $ST_3^m$ 
and a copy of $ST_3^k$ in $ST_3^n$ are 
\textit{adjacent} if they have exactly one vertex in common. 

\begin{prop} \label{razdalje2}
Let $n \ge 2$ be an integer.  
If $H$ is a copy of $ST_3^{n-1}$ and $H'$ a copy of $ST_3^{n-2}$ 
such that $V(H) \cap V(H') = \{p\}$,
then $X(H) \cap I(u,v) = \{p\}$ for every $u \in V(H')\setminus X(H')$ and every $v \in V(H)$.
\end{prop}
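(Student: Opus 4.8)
The plan is to reduce the statement to the canonical first-level decomposition of $ST_3^n$ and then to distances inside a single copy, where Proposition \ref{razdalje} applies directly. First note that for $n=2$ the graph $H'\cong ST_3^0=K_3$ has all three of its vertices of degree $2$, so $V(H')\setminus X(H')=\emptyset$ and the statement is vacuous; hence assume $n\ge 3$. Since the copies of $ST_3^{n-1}$ in $ST_3^n$ are precisely the three first-level ones, I may take $H$ to be one of them, so that $ST_3^n=H\cup H_2\cup H_3$ with $H_2,H_3\cong ST_3^{n-1}$ meeting $H$ in two of the vertices of $X(H)$. A copy $H'\cong ST_3^{n-2}$ meeting $H$ in a single vertex $p$ must then be the corner sub-copy of one of $H_2,H_3$, say $H_2$, that is incident with $p$; thus $p\in X(H)\cap X(H')$ and $p=V(H)\cap V(H_2)$. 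Two facts will be used repeatedly: (i) every first-level copy and every sub-copy is attached to the rest of $ST_3^n$ only through its extreme vertices, which are therefore cut vertices, so each such copy is isometric in $ST_3^n$; and (ii) $u\ne p$, because $u\notin X(H')$ whereas $p\in X(H')$.

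The heart of the argument is to show that geodesics from $u$ enter $H$ through $p$, i.e. $d(u,w)=d(u,p)+d_H(p,w)$ for every $w\in V(H)$. Let $a=V(H)\cap V(H_3)$ be the corner of $H$ shared with $H_3$ and let $b$ be the remaining corner (an extreme vertex of $ST_3^n$), so $X(H)=\{p,a,b\}$. Since $H$ is glued to the rest of $ST_3^n$ only along $X(H)$, every $u,w$-path meets $X(H)$ and hence $d(u,w)=\min_{e\in X(H)}\big(d(u,e)+d_H(e,w)\big)$. Proposition \ref{razdalje} controls the terms: entering through $p$ costs $d(u,p)=d_{H'}(u,p)\le 2^{n-2}$, while the cut vertex $q=V(H_2)\cap V(H_3)$ separates $u$ from $H_3$ and lies at distance at least $2^{n-2}\ge d(u,p)$ from $u$. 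Consequently $a$ is reached from $u$ no more cheaply through $q$ (across $H_3$) than through $p$ (across $H$), and $b$ only through $H$, giving $d(u,a)=d(u,b)=d(u,p)+2^{n-1}=d(u,p)+d_H(p,a)$. Substituting these and invoking the triangle inequality in $H$ yields $d(u,e)+d_H(e,w)\ge d(u,p)+d_H(p,w)$ for every $e\in X(H)$, with equality at $e=p$. Ruling out that the detour toward $a,b$ through $H_3$ undercuts the direct entry through $p$ is the step I expect to demand the most care, and it is exactly where Proposition \ref{razdalje} is indispensable.

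With this, and using isometry of $H$ so that $d(w,v)=d_H(w,v)$ for $w,v\in V(H)$, I obtain for each $w\in X(H)$ that $w\in I(u,v)$ iff $d(u,p)+d_H(p,w)+d_H(w,v)=d(u,p)+d_H(p,v)$, i.e. iff $w\in I_H(p,v)$. Hence $X(H)\cap I(u,v)=X(H)\cap I_H(p,v)$, reducing the claim to an interval inside $H\cong ST_3^{n-1}$. There $p\in I_H(p,v)$ always, whereas for a corner $w\in\{a,b\}$ Proposition \ref{razdalje} gives $d_H(p,w)=2^{n-1}$, already the maximum distance attainable in $ST_3^{n-1}$; thus $d_H(p,w)+d_H(w,v)=d_H(p,v)$ forces $d_H(w,v)=0$, i.e. $v=w$. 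Therefore $X(H)\cap I(u,v)=\{p\}$ whenever $v$ is not one of the two corners of $H$ distinct from $p$, and in the remaining degenerate case the only additional element is the endpoint $v$ itself; with the interval taken without its endpoints (equivalently, for $v$ ranging over the non-extreme vertices of $H$, which is what the applications use) the identity $X(H)\cap I(u,v)=\{p\}$ holds as stated.
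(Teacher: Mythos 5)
Your proposal is correct in substance and follows essentially the same route as the paper: both arguments work with the first-level decomposition of $ST_3^n$ into $H$ and two other copies of $ST_3^{n-1}$, identify $H'$ as the corner sub-copy attached to $H$ at $p$, and use Proposition \ref{razdalje} to show that entering $H$ anywhere other than $p$ is too expensive. The paper does this by picking intermediate vertices $z$ and $r$ and comparing the route through $V(H)\cap V(\tilde H)$ (length $>2^{n-1}+2^{n-2}$) with the route through $p$ (length $\le 2^{n-1}+2^{n-2}$); you instead establish $d(u,w)=d(u,p)+d_H(p,w)$ for all $w\in V(H)$ and reduce the claim to intervals inside $H$. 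Your write-up is actually more complete than the paper's: the paper only excludes the corner it calls $q$ (your $a$) from $I(u,v)$ and never addresses the third extreme vertex of $H$ (your $b$), whereas your diameter argument ($d_H(p,w)=2^{n-1}$ forces $v=w$) handles both non-$p$ corners uniformly; you also correctly flag the degenerate case $v\in X(H)\setminus\{p\}$, where the stated equality literally fails because $v\in I(u,v)$ --- an edge case the paper's statement and proof silently ignore, and which is harmless downstream since the applications only need that every shortest $u,v$-path passes through $p$.

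One thing you should fix: your structural claims about cut vertices are false. $ST_3^n$ is $2$-connected for $n\ge 1$, so the attachment vertices of the copies are not cut vertices, and your $q=V(H_2)\cap V(H_3)$ does not separate $u$ from $H_3$ (one reaches $H_3$ from $u$ via $p$ and across $H$). What is true, and what your argument actually uses, is weaker: distinct copies intersect only in extreme vertices, so every path leaving a copy must pass through one of its extreme vertices; the dichotomy ``reach $a$ either through $p$ across $H$ or through $q$ across $H_3$'' survives in this form. Likewise, the isometry of $H$ (and of $H'$) in $ST_3^n$ does not follow ``therefore'' from any separation property; it needs the same kind of Proposition~\ref{razdalje} comparison you perform elsewhere --- an excursion between two corners of a copy of $ST_3^{n-1}$ through the rest of the graph costs at least $2\cdot 2^{n-1}$, exceeding the internal distance $2^{n-1}$. (Alternatively, your min-formula already yields $d(p,v)=d_H(p,v)$ for free, since $d(u,v)\le d(u,p)+d(p,v)\le d(u,p)+d_H(p,v)=d(u,v)$ forces equality throughout.) With these justifications substituted, your entry-cost computation and the reduction to $I_H(p,v)$ go through, so this is a flaw of justification rather than a gap in the skeleton; note that the paper itself quietly relies on the same isometry facts when it writes $d(z,q)=d(z,p)+d(p,q)=2^{n-1}+2^{n-2}$.
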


\begin{proof}
Let ${\hat H}$ and ${\tilde H}$ be two other copies of $ST_3^{n-1}$ in $ST_3^{n}$, 
with ${\hat H}$  contains  $H'$.
Let $V(H) \cap V({\tilde H}) = \{ q \}$,  
$V({\hat H}) \cap V({\tilde H}) = \{ r \}$ and let $z$ denote the extreme 
vertex of $H'$ closest to $r$. 
Note that every path between $u \in V(H')\setminus X(H')$ and $v \in V(H)$ contains 
either $p$ or $q$. Clearly,  a shortest $u,r$-path contains  $z$,  
while a shortest $z,q$-path contains  either $r$ of $p$. 
By Proposition \ref{razdalje}, we have
$d(z,q)=d(z,p)+d(p,q)=d(z,r)+d(r,q) = 2^{n-1}+2^{n-2}$. 
Moreover, 
a length of a $u,v$-path that contains $q$ is at least 
$d(u,z)+d(z,r)+d(r,q)$, where $d(u,z) \ge 1$. Thus, the length of this path is more then  $2^{n-1}+2^{n-2}$.
On the other hand,   
the length of a $u,v$-path that contains $p$ is at most 
$d(z,p)+d(p,q) = 2^{n-1}+2^{n-2}$.
Since $d(u,v) = d(u,p) + d(p,v)$, we may conclude that
a shortest $u,v$-path cannot contain $q$.
\end{proof}

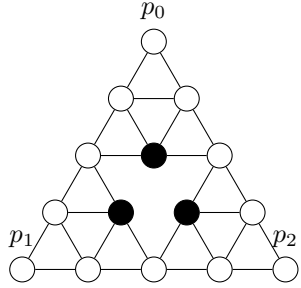
\begin{figure}[bt] 
\centering

\begin{tikzpicture}[scale=0.35]

\node[black, circle, draw] (p1) at (5/4*4,4*8.66/4) [label=above:$p_0$] {};
\node[black, circle, draw] (p2) at (5/4*3,3*8.66/4)  {};
\node[black, circle, draw] (p3) at (5/4*5,3*8.66/4)  {};
\node[black, circle, draw] (p4) at (5/4*2,2*8.66/4)  {};
\node[black, fill=black, circle, draw] (p5) at (5/4*4,2*8.66/4)  {};
\node[black, circle, draw] (p6) at (5/4*6,2*8.66/4)  {};
\node[black, circle, draw] (p7) at (5/4*1,1*8.66/4)  {};
\node[black, fill=black, circle, draw] (p8) at (5/4*3,1*8.66/4)  {};
\node[black, circle, draw] (p9) at (0,0) [label=above:$p_1$] {};
\node[black, circle, draw] (p10) at (5/4*2,0)  {};
\node[black, fill=black, circle, draw] (p11) at (5/4*5,1*8.66/4)  {};
\node[black, circle, draw] (p12) at (5/4*7,1*8.66/4)  {};
\node[black, circle, draw] (p13) at (5/4*4,0)  {};
\node[black, circle, draw] (p14) at (5/4*6,0)  {};
\node[black, circle, draw] (p15) at (5/4*8,0) [label=above:$p_2$] {};

\draw   (p1) -- (p2) -- (p3) -- (p1);
\draw   (p2) -- (p4) -- (p5) -- (p2);
\draw   (p3) -- (p5) -- (p6) -- (p3);
\draw   (p4) -- (p7) -- (p8) -- (p4);
\draw   (p7) -- (p9) -- (p10) -- (p7);
\draw   (p8) -- (p10) -- (p13) -- (p8);
\draw   (p6) -- (p11) -- (p12) -- (p6);
\draw   (p11) -- (p13) -- (p14) -- (p11);
\draw   (p12) -- (p14) -- (p15) -- (p12);

\end{tikzpicture}

\caption{Proper vertices of $ST_3^2$ } \label{properfig}
\end{figure}

The following sections will demonstrate that the mutual-visibility and general position sets of \(ST_3^2\) play a crucial role in understanding these sets within Sierpi\'nski triangle graphs.

Let ${\cal H}_2^n$ denote the set of all copies of $ST_3^2$ in $ST_3^n$.
Note that for $n\ge 2$, the cardinality of ${\cal H}_2^n$ is exactly $3^{n-2}$. 
Let $p_0$, $p_1$ and $p_2$ be the extreme vertices of $ST_3^2$. 
We observe that $ST_3^2$ admits exactly three vertices that do not belong to 
a shortest $p_0,p_1$-path, shortest $p_0,p_2$-path or shortest $p_1,p_2$-path, i.e.  
$|V(ST_3^2)\setminus (I(p_0,p_1) \cup I(p_0,p_2)) \cup I(p_1,p_2)| = 3$.
We refer to the vertices in the set  $V(ST_3^2)\setminus (I(p_0,p_1) \cup I(p_0,p_2)) \cup I(p_1,p_2)$ as the \textit{proper vertices} of $ST_3^2$.  
See Fig. \ref{properfig} for an illustration. We denote the set of all proper vertices of the graph $H_2$, which is isomorphic to $ST_3^2$, by $P_{H_2}$.

\subsection{Mathematical optimization methods}

We applied two well-known techniques to compute the mutual-visibility sets of hypercubes: Integer Linear Programming (ILP) and a reduction to SAT. Both methods have been successfully used in the past for distance-constrained problems on various finite and infinite graphs. Since these techniques, when adapted for the general position problem and various mutual-visibility problems, are already discussed in detail in \cite{gp} and \cite{vesel}, we present here only the main idea behind their application to the mutual-visibility problem.

Let  $G=(V, E)$ be a graph.
For a vertex $v \in V(G)$ we introduce the Boolean variable $x_v$ such that $x_v=1$ if and only if $v$ belongs to the mutual-visibility set $M$ of $G$.
The problem of finding the maximal mutual-visibility set can be formulated as an integer linear program defined by:\\

\begin{equation}
\textrm{maximize } \sum_{v \in V(G)} x_v
\end{equation}
subject to:
\begin{eqnarray}
 x_u + x_v - \sum_{P \in {\cal P}(u,v)}{z_{u,v,P}} \leq 1,  & \forall u,v \in V(G); \label{lpeq1} \\
 z_{u,v,P} + x_{z} \leq 1, & \forall u,v \in V(G); \label{lpeq2} \\
  & \forall P \in {\cal P}(u,v); \forall z \in V(P) \setminus \{u,v\}. \nonumber  
\end{eqnarray}

where ${\cal P}(u,v)$ denotes the set of all different shortest paths between vertices $u$ and $v$, while for $P \in {\cal P}(u,v)$ the set 
$V(P)\setminus\{u,v\}$ comprises all
intermediary vertices in the corresponding shortest path $P$ between $u$ and $v$.

 The additional variable $z_{u,v,P}$ equals 1 if and only if the  shortest path $P$ between $u$ and $v$ enables mutual visibility of these two vertices. 
 
 We applied  Gurobi Optimization \cite{gurobi} for solving ILP models.

Let  $G=(V, E)$ be a graph with $n$ vertices and $\ell$ a positive integer 
(the size of a potential mutual-visibility set).
To define a corresponding propositional satisfiability test (SAT), 
for every $v \in V(G)$ we introduce an atom $x_v$.
Intuitively, this atom expresses whether vertex $v$ is inside the mutual-visibility set $M$ or not. 
More precisely, $x_v=0$ if and only if $v$ belongs to the mutual-visibility set $M$.

First collection of propositional formulas define an encoding for cardinality constraints (known as $\ge k(x_1,...,x_n)$ constraints) which ensure that at most $k=n-\ell$ atoms are assigned value 1.  
The details of the applied encoding are presented in 
 \cite{sinz}. 
 
In order to define mutual-visibility constraints, 
consider the following propositional formulas:

\begin{eqnarray}
x_u \vee  x_v \vee \bigvee_{P \in {\cal P}(u,v)} \left[ \bigwedge_{ x_z \in V(P) \setminus \{u,v\}} x_{z} \right] \  (\forall u,v  \in V(G)),  
\label{sateq}
\end{eqnarray}

where ${\cal P}(u,v)$ denotes the set of all different shortest paths between vertices $u$ and $v$, while for $P \in {\cal P}(u,v)$ the set 
$V(P)\setminus\{u,v\}$ comprises all
intermediary vertices in the corresponding shortest path $P$ between $u$ and $v$.

Before using the above formulas  in a SAT solver, they have to be transformed to the conjunctive normal form.
These  propositional formulas transform a mutual-visibility problem into a propositional satisfiability test.

\section{Mutual visibility and general position}

\begin{prop} \label{edenM}
Let $n \ge 3$ be an integer and $M$  a mutual-visibility set of $ST_3^n$. 
If $H$ is a copy of $ST_3^{n-1}$ and $H'$ a copy of $ST_3^{n-2}$ 
adjacent to $H$ in $ST_3^n$ 
such that $V(H) \cap V(H') = \{p\}$ and 
$M \cap (V(H')\setminus X(H')) \not = \emptyset$, then 
$(M \cap V(H))  \cup  \{p \}$ is a mutual visibility set of $H$
\end{prop}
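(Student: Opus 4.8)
The plan is to set $M_H = (M\cap V(H))\cup\{p\}$, take an arbitrary pair $x,y\in M_H$, and exhibit a shortest $x,y$-path in $H$ whose interior avoids $M_H$. Before any case split I would record two structural facts. First, $H$ is isometric in $ST_3^n$: since $H$ is attached to the other two copies of $ST_3^{n-1}$ only through two of its extreme vertices, say $a$ and $b$, any path that leaves and re-enters $H$ is forced to travel from $a$ to $b$ outside $H$, a detour of length at least $2^n$, whereas $d(a,b)=2^{n-1}$ inside $H$ by Proposition \ref{razdalje}; hence no shortest path between two vertices of $H$ leaves $H$, and distances in $H$ agree with those in $ST_3^n$. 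Second, $p$ is an extreme vertex of $H$, so it has degree $2$ in $H$ and its two neighbours are adjacent (they close the smallest triangle at the corner $p$); therefore $p$ is never an interior vertex of a shortest path inside $H$, for a traversal $u$-$p$-$v$ could always be shortened to the edge $u$-$v$.

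Next I would split on whether $p\in\{x,y\}$. If $p\notin\{x,y\}$, then $x,y\in M\cap V(H)\subseteq M$, so mutual visibility of $M$ yields an $M$-free shortest $x,y$-path $P$ in $ST_3^n$; by the first fact $P$ lies in $H$. By the second fact $P$ cannot pass through $p$, so $P$ avoids all of $M_H\setminus\{x,y\}$, and $x,y$ are $M_H$-visible. This case does not use the hypothesis on $H'$ at all: the freshly added corner $p$ is automatically harmless because of its degree.

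The remaining case, $y=p$ and $x\ne p$ (the case $x=y=p$ being trivial), is where the hypothesis $M\cap(V(H')\setminus X(H'))\ne\emptyset$ is used. I would fix $w$ in this set; then $w\notin V(H)$, so $w\ne x$ and $w\ne p$. Proposition \ref{razdalje2}, applied to $w\in V(H')\setminus X(H')$ and $x\in V(H)$, gives $X(H)\cap I(w,x)=\{p\}$, so in particular $p\in I(w,x)$ and every shortest $w,x$-path passes through $p$. Since $w,x\in M$ are $M$-visible, I would pick an $M$-free shortest $w,x$-path $Q$; it runs through $p$, and its sub-path $R$ from $p$ to $x$ is a shortest $p,x$-path, lying in $H$ by the first fact. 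The interior vertices of $R$ are interior vertices of $Q$, hence lie outside $M$, and none of them equals $p$; thus $R$ meets $M_H$ only in its endpoints, so $p$ and $x$ are $M_H$-visible.

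Having handled both cases, every pair of $M_H$ is $M_H$-visible, so $M_H$ is a mutual-visibility set of $H$. I expect the only real obstacle to be the pairs that contain $p$: for ordinary pairs the degree-$2$ argument disposes of $p$ immediately, but to certify that $p$ itself sees the other members of $M_H$ one needs a shortest path from $p$ that stays $M$-free, and it is precisely the vertex $w\in M$ inside $H'$ -- for which $p$ is a forced waypoint by Proposition \ref{razdalje2} -- that converts an $M$-free $w,x$-path into the required path. Establishing the isometry of $H$ is the other point needing care, though it follows quickly from Proposition \ref{razdalje}.
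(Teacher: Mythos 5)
Your proof is correct and follows essentially the same route as the paper's: pick $w\in M\cap(V(H')\setminus X(H'))$, use mutual visibility of $M$ together with Proposition~\ref{razdalje2} to force an $M$-free shortest path through $p$, and truncate it at $p$. You additionally verify details the paper leaves implicit (the isometry of $H$ in $ST_3^n$ and the fact that the corner $p$ is never interior to a shortest path, which settles the pairs not involving $p$), but the underlying argument is the same.
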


\begin{proof}
Let $u \in M \cap V(H')$. Since $M$ is a mutual  visibility set of $ST_3^n$, 
there exists an $M$-free shortest $u,v$-path in $ST_3^n$ for every $v \in M \cap V(H)$.    
Moreover, by Proposition \ref{razdalje2}, 
every shortest $u,v$-path in $ST_3^n$ contains $p$. Thus,  
there exists an $M$-free shortest $p,v$-path in $ST_3^n$ for every $v \in M \cap V(H)$.  It follows that $(M \cap (V(H')\setminus X(H'))  \cup  \{p\}$ is  a mutual visibility set of $H$. 
\end{proof}

Note that if the conditions of the above proposition are fulfilled,  it follows that 
$|(M \cap (V(H)\setminus X(H))| \le \mu(ST_3^{n-1}) - 1$.

\begin{figure}[bt] 
\centering

\begin{tikzpicture}[scale=0.35]

\node[black, fill=black, circle, draw] (p1) at (5/4*4,4*8.66/4)  {};
\node[black, circle, draw] (p2) at (5/4*3,3*8.66/4)  {};
\node[black, fill=black, circle, draw] (p3) at (5/4*5,3*8.66/4)  {};
\node[black, fill=black, circle, draw] (p4) at (5/4*2,2*8.66/4)  {};
\node[black, fill=black, circle, draw] (p5) at (5/4*4,2*8.66/4)  {};
\node[black, circle, draw] (p6) at (5/4*6,2*8.66/4)  {};
\node[black, circle, draw] (p7) at (5/4*1,1*8.66/4)  {};
\node[black,  circle, draw] (p8) at (5/4*3,1*8.66/4)  {};
\node[black, circle, draw] (p9) at (0,0)  {};
\node[black, circle, draw] (p10) at (5/4*2,0)  {};
\node[black, circle, draw] (p11) at (5/4*5,1*8.66/4)  {};
\node[black, circle, draw] (p12) at (5/4*7,1*8.66/4)  {};
\node[black, circle, draw] (p13) at (5/4*4,0)  {};
\node[black, circle, draw] (p14) at (5/4*6,0)  {};
\node[black, circle, draw] (p15) at (5/4*8,0)  {};

\draw   (p1) -- (p2) -- (p3) -- (p1);
\draw   (p2) -- (p4) -- (p5) -- (p2);
\draw   (p3) -- (p5) -- (p6) -- (p3);
\draw   (p4) -- (p7) -- (p8) -- (p4);
\draw   (p7) -- (p9) -- (p10) -- (p7);
\draw   (p8) -- (p10) -- (p13) -- (p8);
\draw   (p6) -- (p11) -- (p12) -- (p6);
\draw   (p11) -- (p13) -- (p14) -- (p11);
\draw   (p12) -- (p14) -- (p15) -- (p12);

\node[black, circle, draw] (q1) at (12+5/4*4,4*8.66/4)  {};
\node[black, fill=black,circle, draw] (q2) at (12+5/4*3,3*8.66/4)  {};
\node[black, fill=black,circle, draw] (q3) at (12+5/4*5,3*8.66/4)  {};
\node[black, fill=black,circle, draw] (q4) at (12+5/4*2,2*8.66/4)  {};
\node[black, circle, draw] (q5) at (12+5/4*4,2*8.66/4)  {};
\node[black, fill=black,circle, draw] (q6) at (12+5/4*6,2*8.66/4)  {};
\node[black, circle, draw] (q7) at (12+5/4*1,1*8.66/4)  {};
\node[black, circle, draw] (q8) at (12+5/4*3,1*8.66/4)  {};
\node[black, circle, draw] (q9) at (12,0)  {};
\node[black, circle, draw] (q10) at (12+5/4*2,0)  {};
\node[black, circle, draw] (q11) at (12+5/4*5,1*8.66/4)  {};
\node[black, circle, draw] (q12) at (12+5/4*7,1*8.66/4)  {};
\node[black, circle, draw] (q13) at (12+5/4*4,0)  {};
\node[black, circle, draw] (q14) at (12+5/4*6,0)  {};
\node[black, circle, draw] (q15) at (12+5/4*8,0)  {};

\draw   (q1) -- (q2) -- (q3) -- (q1);
\draw   (q2) -- (q4) -- (q5) -- (q2);
\draw   (q3) -- (q5) -- (q6) -- (q3);
\draw   (q4) -- (q7) -- (q8) -- (q4);
\draw   (q7) -- (q9) -- (q10) -- (q7);
\draw   (q8) -- (q10) -- (q13) -- (q8);
\draw   (q6) -- (q11) -- (q12) -- (q6);
\draw   (q11) -- (q13) -- (q14) -- (q11);
\draw   (q12) -- (q14) -- (q15) -- (q12);

\end{tikzpicture}

\caption{Four mutually visible vertices of a copy of $ST_3^1$ in $ST_3^2$} \label{four}
\end{figure}
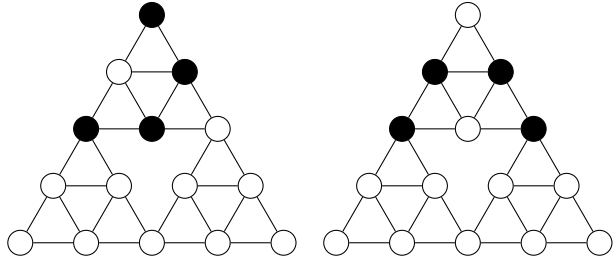

\begin{prop} \label{zgornjaM}
If $n\ge 1$, then $\mu(ST_3^n) \le 3^{n-1}+3$.
\end{prop}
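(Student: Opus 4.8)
The plan is to argue by induction on $n$. For the base I would dispose of the small cases $n\in\{1,2\}$, where $3^{n-1}+3$ equals $4$ and $6$: since $ST_3^1$ and $ST_3^2$ have only $6$ and $15$ vertices, the bound can be certified by the ILP/SAT computation of Section~2 (which moreover pins down $\mu(ST_3^1)=4$ and $\mu(ST_3^2)=6$, the latter realised by the three proper vertices together with the three extreme vertices, explaining the additive $3$). The inductive step then handles $n\ge 3$, exactly the range in which Propositions~\ref{razdalje2} and~\ref{edenM} are available.

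For the step, fix a mutual-visibility set $M$ and decompose $ST_3^n$ into its three copies $H_0,H_1,H_2$ of $ST_3^{n-1}$, meeting pairwise in shared extreme vertices $s_{01},s_{02},s_{12}$, while the remaining extreme vertices $e_0,e_1,e_2$ of the copies are precisely the extreme vertices of $ST_3^n$. First I would check that each copy is convex: a shortest path between two vertices of $H_i$ gains nothing by leaving $H_i$, since re-entering forces a detour through a second shared vertex that is strictly longer by Proposition~\ref{razdalje}. Hence every $M_i:=M\cap V(H_i)$ is a mutual-visibility set of $H_i\cong ST_3^{n-1}$, and the induction hypothesis gives $|M_i|\le 3^{n-2}+3$. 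Since each shared vertex lies in exactly two copies and no vertex lies in all three, inclusion–exclusion yields
\begin{equation*}
|M|\;=\;|M_0|+|M_1|+|M_2|\;-\;\bigl|M\cap\{s_{01},s_{02},s_{12}\}\bigr|.
\end{equation*}
The naive estimate only gives $3^{n-1}+9$, so the task reduces to the cleaner recursion $\mu(ST_3^n)\le 3\mu(ST_3^{n-1})-6$; equivalently, one must show that the three copies cannot all be jointly maximal, and must collectively fall short by enough to absorb the surplus.

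This joint-deficiency step is where I expect the real difficulty to lie, and it is exactly what Propositions~\ref{razdalje2} and~\ref{edenM} are meant to supply. The governing principle is that a shared vertex placed in $M$ blocks visibility. On the one hand, $d(e_i,e_j)=2^n$ by Proposition~\ref{razdalje}, and every $e_i,e_j$-geodesic must cross at $s_{ij}$ (the detour through the third copy has length $3\cdot 2^{n-1}>2^n$); hence if $s_{ij}\in M$ then $e_i$ and $e_j$ cannot both lie in $M$. On the other hand, Proposition~\ref{edenM}, driven by Proposition~\ref{razdalje2}, shows that once a neighbouring corner copy of $ST_3^{n-2}$ already contributes a vertex of $M$, the interior count of the adjacent copy $H_i$ drops strictly below its maximum. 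I would therefore carry through the sharper inductive invariant that a mutual-visibility set of $ST_3^m$ contains at most $3^{m-1}$ \emph{non-extreme} vertices, write $|M|=(m_0+m_1+m_2)+\bigl|M\cap\{s_{01},s_{02},s_{12}\}\bigr|+\bigl|M\cap\{e_0,e_1,e_2\}\bigr|$ with $m_i\le 3^{n-2}$, and then charge each shared vertex that lies in $M$ either to the loss of an extreme vertex it blocks or to a guaranteed unit decrease of some $m_i$ forced by Proposition~\ref{edenM}.

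The delicate combinatorial core, which I regard as the main obstacle, is to organise these charges so that they are never double-counted: one must run a short case analysis over the subset of $\{s_{01},s_{02},s_{12}\}$ contained in $M$ and verify that in every case the interiors plus the six special vertices contribute at most $3^{n-1}+3$. Establishing both halves of the strengthened invariant on the base case $ST_3^2$, and confirming that the per-shared-vertex saving can always be realised, is the part that requires genuine care rather than routine bookkeeping; once it is in place the induction closes immediately.
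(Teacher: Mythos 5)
Your reduction to the three copies $H_0,H_1,H_2$ of $ST_3^{n-1}$ (including the convexity claim and the inclusion--exclusion identity) and your observation that a shared vertex in $M$ blocks the two extreme vertices it separates are sound, but the pivot of your induction --- the ``sharper inductive invariant'' that a mutual-visibility set of $ST_3^m$ contains at most $3^{m-1}$ non-extreme vertices --- is false, and it fails precisely at the base case $ST_3^2$ where you propose to establish it. Write $ST_3^2$ as three copies $K_0,K_1,K_2$ of $ST_3^1$, where $K_i$ contains the extreme vertex $p_i$, and let $s_{ij}$ be the vertex shared by $K_i$ and $K_j$. In $K_i$ let $w_i$ be the proper vertex and let $q_i$ be the neighbour of $p_i$ lying on the side $I(p_i,p_{i+1})$ (indices modulo $3$). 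Then $M^*=\{w_0,w_1,w_2,q_0,q_1,q_2\}$ is a mutual-visibility set: $w_i$ and $w_j$ see each other through their unique common neighbour $s_{ij}$; $w_i$ and $q_i$ are adjacent; and every remaining pair is joined by a geodesic whose internal vertices are among the $s_{ij}$ and the three midpoint vertices not chosen (for instance $q_0$ and $q_1$ via $s_{01}$ and the second neighbour of $p_1$, and $q_0$ and $w_2$ via the second neighbour of $p_0$ and $s_{02}$), none of which lie in $M^*$. So $ST_3^2$ has a mutual-visibility set of six vertices, all of them non-extreme --- double your claimed bound of $3^{2-1}=3$. This breaks the proposal at both ends: the base case of the strengthened statement is false, and the per-copy bound $m_i\le 3^{n-2}$ driving your charging identity is unavailable (a copy $H_i\cong ST_3^2$ inside $ST_3^3$ may, as far as your restriction argument can tell, carry six interior vertices of $M$). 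The failure is structural, not an off-by-one: largest mutual-visibility sets of $ST_3^2$ need not contain any extreme vertex, so ``number of non-extreme vertices'' is not a quantity that can propagate through the induction.

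Independently of this, the write-up defers rather than performs the decisive step: even granting your invariant, the bookkeeping yields only $3\cdot 3^{n-2}+3+3=3^{n-1}+6$, and the missing saving of $3$ is exactly the ``delicate combinatorial core'' you leave open. The paper obtains the needed per-copy deficit by a different mechanism: instead of counting non-extreme vertices, it uses Proposition \ref{edenM} (resting on Proposition \ref{razdalje2}) --- if the interior of a copy of $ST_3^{n-2}$ adjacent to a copy $H$ of $ST_3^{n-1}$ meets $M$, then $(M\cap V(H))\cup\{p\}$ is itself a mutual-visibility set of $H$, forcing the count inside $H$ strictly below $\mu(ST_3^{n-1})$ --- and then splits the induction into three cases according to how many such adjacent interiors are occupied. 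Tellingly, that arithmetic only closes for $n\ge 5$, which is why the paper checks $n=3,4$ by the ILP/SAT machinery of Section 2.3; your plan to start the induction at $n=3$ from hand-checkable base cases is exactly what pushed you toward the false strengthening.
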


\begin{proof}
It is not difficult to confirm the proposition for $ST_3^1$, since 
for a subset $M \subset V(ST_3^1)$ of cardinality 5 we can easily find 
a triple $u,v,w\in M$ such that $I(u,v)=\{u,v,w\}$.

Consider \( ST_3^2 \). Suppose \( M \) is a mutual-visibility set of \( ST_3^2 \) with 7 vertices.
Recall that \( ST_3^2 \) consists of 3 copies of \( ST_3^1 \), denoted \( H \), \( H' \), and \( H'' \). 

If \( |M \cap V(H)| = 4 \), then 
by symmetry, we can assume the configuration is as depicted in Fig. \ref{four}. It is straightforward to see that for every \( w \in V(ST_3^2) \setminus V(H) \), there always exist
\( u, v \in M \) such that \( v \) belongs to every shortest \( u, w \)-path. Thus, \( |M| = 4 \), which contradicts our assumption that \( |M| = 7 \).

Let \( M \cap V(H) = \{u, v, w\} \).
If \( u \) and \( v \) are both extreme vertices of \( H \) that also belong to another copy of \( ST_3^1 \), say \( H' \), then for every \( w \in V(ST_3^2) \setminus V(H) \), either \( u \) or \( v \) must lie on every shortest \( w, v \)-path. Therefore, \( |M| \le 3 \), which contradicts the assumption that \( |M| = 7 \).
If \( u \) belongs to \( H' \), but \( v \) does not, then
we can find a vertex \( v \in M \cap V(H) \) such that for every \( z \in V(H') \setminus (V(H) \cup V(H'')) \), the vertex \( u \) belongs to every shortest \( v, z \)-path. Thus, \( V(H') \setminus (V(H) \cup V(H'')) \) does not contain any vertices of \( M \), and \( |M| \le 6 \), which is a contradiction.

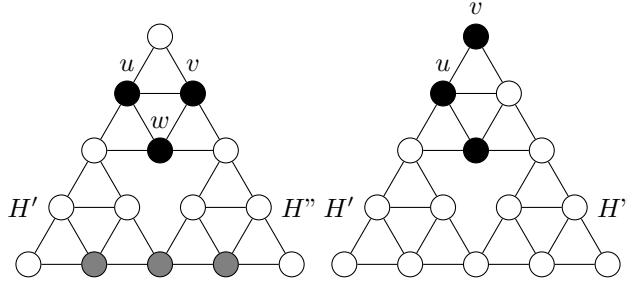
\begin{figure}[bt] 
\centering

\begin{tikzpicture}[scale=0.35]

\node[black, circle, draw] (p1) at (5/4*4,4*8.66/4)  {};
\node[black, fill=black,  circle, draw] (p2) at (5/4*3,3*8.66/4) [label=above:$u$] {};
\node[black, fill=black, circle, draw] (p3) at (5/4*5,3*8.66/4) [label=above:$v$]{};
\node[black,  circle, draw] (p4) at (5/4*2,2*8.66/4)  {};
\node[black, fill=black, circle, draw] (p5) at (5/4*4,2*8.66/4)  [label=above:$w$] {};
\node[black, circle, draw] (p6) at (5/4*6,2*8.66/4)  {};
\node[black, circle, draw] (p7) at (5/4*1,1*8.66/4) 
[label=left:$H'$] {};
\node[black,  circle, draw] (p8) at (5/4*3,1*8.66/4)  {};
\node[black, circle, draw] (p9) at (0,0)  {};
\node[black, fill=gray, circle, draw] (p10) at (5/4*2,0)  {};
\node[black, circle, draw] (p11) at (5/4*5,1*8.66/4)  {};
\node[black, circle, draw] (p12) at (5/4*7,1*8.66/4) [label=right:$H"$] {};
\node[black, fill=gray, circle, draw] (p13) at (5/4*4,0)  {};
\node[black, fill=gray, circle, draw] (p14) at (5/4*6,0)  {};
\node[black, circle, draw] (p15) at (5/4*8,0)  {};

\draw   (p1) -- (p2) -- (p3) -- (p1);
\draw   (p2) -- (p4) -- (p5) -- (p2);
\draw   (p3) -- (p5) -- (p6) -- (p3);
\draw   (p4) -- (p7) -- (p8) -- (p4);
\draw   (p7) -- (p9) -- (p10) -- (p7);
\draw   (p8) -- (p10) -- (p13) -- (p8);
\draw   (p6) -- (p11) -- (p12) -- (p6);
\draw   (p11) -- (p13) -- (p14) -- (p11);
\draw   (p12) -- (p14) -- (p15) -- (p12);

\node[black, fill=black,  circle, draw] (q1) at (12+5/4*4,4*8.66/4) [label=above:$v$] {};
\node[black, fill=black,circle, draw] (q2) at (12+5/4*3,3*8.66/4) [label=above:$u$] {};
\node[black, circle, draw] (q3) at (12+5/4*5,3*8.66/4)  {};
\node[black, circle, draw] (q4) at (12+5/4*2,2*8.66/4)  {};
\node[black, fill=black, circle, draw] (q5) at (12+5/4*4,2*8.66/4)  {};
\node[black, circle, draw] (q6) at (12+5/4*6,2*8.66/4)  {};
\node[black, circle, draw] (q7) at (12+5/4*1,1*8.66/4) [label=left:$H'$] {};
\node[black, circle, draw] (q8) at (12+5/4*3,1*8.66/4)  {};
\node[black, circle, draw] (q9) at (12,0)  {};
\node[black, circle, draw] (q10) at (12+5/4*2,0)  {};
\node[black, circle, draw] (q11) at (12+5/4*5,1*8.66/4)  {};
\node[black, circle, draw] (q12) at (12+5/4*7,1*8.66/4) [label=right:$H"$] {};
\node[black, circle, draw] (q13) at (12+5/4*4,0)  {};
\node[black, circle, draw] (q14) at (12+5/4*6,0)  {};
\node[black, circle, draw] (q15) at (12+5/4*8,0)  {};

\draw   (q1) -- (q2) -- (q3) -- (q1);
\draw   (q2) -- (q4) -- (q5) -- (q2);
\draw   (q3) -- (q5) -- (q6) -- (q3);
\draw   (q4) -- (q7) -- (q8) -- (q4);
\draw   (q7) -- (q9) -- (q10) -- (q7);
\draw   (q8) -- (q10) -- (q13) -- (q8);
\draw   (q6) -- (q11) -- (q12) -- (q6);
\draw   (q11) -- (q13) -- (q14) -- (q11);
\draw   (q12) -- (q14) -- (q15) -- (q12);

\end{tikzpicture}

\caption{Three mutually visible vertices of a copy of $ST_3^1$ in $ST_3^2$} \label{three}
\end{figure}

If \( \{u, v, w\} \) does not belong to \( H' \) and \( H'' \), then 
we consider the following cases. 
   
   If \( \{u, v, w\} \) induces a triangle  and  \( u \) is an extreme vertex of \( ST_3^2 \), then neither \( H' \) nor \( H'' \) can contain a vertex in \( M \), leading to a contradiction.

    If \( \{u, v, w\} \) induces a triangle  and  none of \( u, v, w \) are extreme vertices, refer to the left-hand side of Fig. \ref{three}. For every white vertex \( z \) of \( H' \) (resp. \( H'' \)), the shortest \( v, z \)-path (resp. \( u, z \)-path) contains either \( u \) or \( w \) (resp. either \( v \) or \( w \)). Thus, only gray vertices of \( H' \) and \( H'' \) can belong to \( M \), leading to \( |M| \le 6 \), which is a contradiction.

  If \( \{u, v, w\} \) does not induce a triangle, 
then, as depicted on the right-hand side of Fig. \ref{three}, for every vertex \( z \in V(H') \setminus V(H'') \), the shortest \( v, z \)-path contains \( u \). Therefore, vertices in \( V(H') \setminus V(H'') \) cannot be in \( M \), implying \( |M| \le 6 \), which contradicts our assumption.

Since we have reached a contradiction in every case where \( |M| = 7 \), we conclude that the cardinality of \( M \) is at most 6 when \( n = 2 \). 

For \( n = 3, 4 \), the proposition is confirmed by a computer search described in Subsection 2.3. Let \( n \ge 5 \). The rest of the proof proceeds by induction on \( n \). Assume the proposition holds for dimensions less than or equal to \( n-1 \), and let \( M \) be a mutual-visibility set of \( ST_3^n \). We consider the following cases.

A. \( ST_3^n \) contains a copy of \( ST_3^{n-1} \), say \( H \), such that for two copies of \( ST_3^{n-2} \) adjacent to \( H \), say \( H' \) and \( H'' \), neither \( V(H') \setminus X(H') \) nor \( V(H'') \setminus X(H'') \) contains a vertex in \( M \). 

Note that the subgraph of \( ST_3^n \) induced by \( V(ST_3^n) \setminus (V(H') \setminus X(H') \cup V(H'') \setminus X(H'')) \) contains one copy of \( ST_3^{n-1} \) and four copies of \( ST_3^{n-2} \). By the inductive hypothesis, we have
\[
|M| \le 3^{n-2} + 3 + 4 \cdot (3^{n-3} + 3) = 2 \cdot 3^{n-2} + 3^{n-3} + 15 < 3^{n-1} + 3.
\]

B. \( ST_3^n \) contains a copy of \( ST_3^{n-1} \), say \( H \), such that for two copies of \( ST_3^{n-2} \) adjacent to \( H \), say \( H' \) and \( H'' \), \( V(H') \setminus X(H') \) does not contain a vertex in \( M \), while \( V(H'') \setminus X(H'') \) contains a vertex in \( M \). 

By the inductive hypothesis, \( \mu(ST_3^{n-1}) \le 3^{n-3} + 3 \). Since \( V(H'') \setminus X(H'') \) contains a vertex in \( M \), by Proposition \ref{edenM}, we have
\[
|M \cap (V(H) \setminus X(H))| \le 3^{n-3} + 2.
\]

The subgraph of \( ST_3^n \) induced by \( V(ST_3^n) \setminus (V(H') \setminus X(H')) \) contains the following subgraphs: \( H \), another copy of \( ST_3^{n-1} \), and two copies of \( ST_3^{n-4} \). By the inductive hypothesis and the above discussion, we get
\[
|M| \le (3^{n-2} + 2) + (3^{n-2} + 3) + 2 \cdot (3^{n-3} + 3) = 2 \cdot 3^{n-2} + 2 \cdot 3^{n-3} + 11 < 3^{n-1} + 3.
\]

C.  Every copy of \( ST_3^{n-1} \) in \( ST_3^n \) admits two adjacent copies of \( ST_3^{n-2} \) that both contain a vertex in \( M \). 

By Proposition \ref{edenM} and the inductive hypothesis, we have
\[
|M| \le 3 \cdot (3^{n-2} + 1) = 3^{n-1} + 3.
\]

Since we have considered all possible cases, the proof is complete.
\end{proof}

We focus now on the graph \(ST_3^2\). Recall from Subsection 2.2 that every copy of \(ST_3^2\) in \(ST_3^n\) contains three proper vertices. For the set \(X(ST_3^2) = \{p_0, p_1, p_2\}\), these proper vertices form the set \(V(ST_3^2) \setminus (I(p_0, p_1) \cup I(p_0, p_2) \cup I(p_1, p_2))\).

Let $M \subseteq V(ST_3^n)$, $n\ge 2$ and $H_2$ a copy of $ST_3^2$ with extreme vertices 
$p_0$, $p_1$ and $p_2$ in $ST_3^n$. 
We say that $M$ is \textit{$H_2$-proper}  if 
for every $u\in V(H_2)\cap M$ and every $p_i$ and $p_j$, $i,j \in [3]_0$, 
it holds  that $u$ and $p_i$ are $M$-visible and 
$I(p_i,p_j)\cap M = \emptyset$.   

It is not difficult to confirm the following proposition.

\begin{prop} \label{proper}
Let $M \subseteq ST_3^n$, $n\ge 2$ and  $H_2$ be a copy of $ST_3^2$ in 
$ST_3^n$. Then $M$ is $H_2$-proper if and only if 
$M \cap V(H_2)  \subseteq \{ u, v, w \}$, where $\{ u, v, w \}$ is the set   of proper vertices of $H_2$.  
\end{prop}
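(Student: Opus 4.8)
The plan is to reduce the equivalence to the very definition of the proper vertices, namely that they are precisely the vertices of $H_2$ lying outside every interval between two extreme vertices. The one structural ingredient I will lean on throughout is that the copy $H_2$ of $ST_3^2$ is attached to the rest of $ST_3^n$ only along its three extreme vertices $p_0,p_1,p_2$; hence $H_2$ is isometric and convex in $ST_3^n$, so that $d_{ST_3^n}(a,b)=d_{H_2}(a,b)$ for all $a,b\in V(H_2)$, shortest $a,b$-paths can be chosen inside $H_2$, and $I(p_i,p_j)\subseteq V(H_2)$ coincides with the interval computed in the standalone $ST_3^2$. Establishing this convexity is the step I expect to require the most care: I would argue that any $a,b$-path leaving $H_2$ must exit and re-enter through extreme vertices, and that the external detour is never shorter than a path joining the same extreme vertices inside $H_2$; this is handled by the distance bounds of Proposition~\ref{razdalje} together with the recursive construction (the same mechanism already exploited in Proposition~\ref{razdalje2}).

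Granting this, I would first record the defining identity $\{u,v,w\}=V(H_2)\setminus\bigl(I(p_0,p_1)\cup I(p_0,p_2)\cup I(p_1,p_2)\bigr)$, where $|\{u,v,w\}|=3$, and the resulting bridge $I(p_i,p_j)\cap M=I(p_i,p_j)\cap\bigl(M\cap V(H_2)\bigr)$ for all $i,j\in[3]_0$, valid because $I(p_i,p_j)\subseteq V(H_2)$. This links the interval clause of $H_2$-properness to the set-containment in the statement.

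For the forward implication I would assume $M$ is $H_2$-proper. Its interval clause gives $I(p_i,p_j)\cap M=\emptyset$ for all $i,j$, so no vertex of $M\cap V(H_2)$ lies on any of the three side intervals; by the defining identity this is exactly $M\cap V(H_2)\subseteq\{u,v,w\}$. With the bridge in place, this direction is immediate.

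For the reverse implication I would assume $M\cap V(H_2)\subseteq\{u,v,w\}$ and verify both clauses. The interval clause follows at once, since the proper vertices lie outside every side interval and, by convexity, $M$ can meet those intervals only inside $V(H_2)$. For the visibility clause I would use the threefold symmetry of $ST_3^2$: its automorphism group permutes the three extreme vertices cyclically and correspondingly permutes the three proper vertices, so it suffices to treat one proper vertex, say the centre of the top $ST_3^1$-subcopy. For that vertex I would exhibit explicit shortest paths inside $H_2$ to the three extreme vertices (of length $2$ to the nearest one and length $3$ to the other two) whose interior vertices all lie on side intervals and are therefore non-proper; such vertices are not in $M\cap V(H_2)\subseteq\{u,v,w\}$, and since each path stays inside $H_2$ it meets no vertex of $M$ outside $H_2$, while isometry guarantees it is also shortest in $ST_3^n$. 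Thus every proper vertex in $M$ is $M$-visible to every $p_i$, and symmetry completes the verification. Every step here is either a short explicit check or a symmetry reduction; the only genuinely delicate point is the convexity of $H_2$, on which both clauses ultimately rest.
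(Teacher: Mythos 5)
Your proposal is correct, but there is nothing in the paper to compare it against: the paper states this proposition with only the remark that it is ``not difficult to confirm'' and gives no proof, so your write-up supplies the missing justification rather than an alternative to an existing one. The route you chose is the natural one, and its one genuinely load-bearing ingredient is the fact you isolate at the start: $H_2$ meets the rest of $ST_3^n$ only in its three extreme vertices, so $d_{ST_3^n}$ and $d_{H_2}$ agree on $V(H_2)$ and $I_{ST_3^n}(p_i,p_j)=I_{H_2}(p_i,p_j)$; this identification is exactly what the paper tacitly assumes when it defines proper vertices inside a standalone $ST_3^2$ but quantifies the $H_2$-proper condition over intervals of $ST_3^n$. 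One point of precision you should tighten: for the forward implication, isometry suffices (it gives $I_{H_2}(p_i,p_j)\subseteq I_{ST_3^n}(p_i,p_j)$, so a non-proper vertex of $M\cap V(H_2)$ violates the interval clause), but for the reverse implication you need the strict form of convexity, $I_{ST_3^n}(p_i,p_j)\subseteq V(H_2)$, and for that ``the external detour is never shorter'' is not quite enough—you must exclude external paths of \emph{equal} length. This does hold, and your own mechanism yields it: any excursion outside $H_2$ between two extreme vertices of $H_2$ must traverse at least two other copies of $ST_3^2$ corner-to-corner (two distinct copies share at most one vertex), hence has length at least $2\cdot 2^2=8$, while by Proposition~\ref{razdalje} all distances inside $H_2$ are at most $4$. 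With that strengthening recorded, your forward direction, the interval clause of the reverse direction, and the explicit $M$-free shortest paths from a proper vertex to the three corners (lengths $2,3,3$, all interior vertices non-proper and inside $H_2$) constitute a complete and correct proof.
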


\begin{figure}[bt] 
\centering

\begin{picture} (120, 120)

\put(0,0){\circle{1}}\put(7.5,0){\circle{1}}\put(15,0){\circle{1}}\put(3.75,6.49519){\circle{1}}\put(11.25,6.49519){\circle{1}}\put(7.5,12.9904){\circle{1}}\put(0.5,0){\line(1,0){6.5}}\put(8,0){\line(1,0){6.5}}\put(4.25,6.49519){\line(1,0){6.5}}\qbezier(0.25,0.45)(0.25,0.45)(3.5,6.04519)\qbezier(7.25,0.45)(7.25,0.45)(4,6.04519)\qbezier(7.75,0.45)(7.75,0.45)(11,6.04519)\qbezier(14.75,0.45)(14.75,0.45)(11.5,6.04519)\qbezier(4,6.94519)(4,6.94519)(7.25,12.5404)\qbezier(11,6.94519)(11,6.94519)(7.75,12.5404)
\put(15,0){\circle{1}}\put(22.5,0){\circle{1}}\put(30,0){\circle{1}}\put(18.75,6.49519){\circle{1}}\put(26.25,6.49519){\circle{1}}\put(22.5,12.9904){\circle{1}}\put(15.5,0){\line(1,0){6.5}}\put(23,0){\line(1,0){6.5}}\put(19.25,6.49519){\line(1,0){6.5}}\qbezier(15.25,0.45)(15.25,0.45)(18.5,6.04519)\qbezier(22.25,0.45)(22.25,0.45)(19,6.04519)\qbezier(22.75,0.45)(22.75,0.45)(26,6.04519)\qbezier(29.75,0.45)(29.75,0.45)(26.5,6.04519)\qbezier(19,6.94519)(19,6.94519)(22.25,12.5404)\qbezier(26,6.94519)(26,6.94519)(22.75,12.5404)
\put(7.5,12.9904){\circle{1}}\put(15,12.9904){\circle{1}}\put(22.5,12.9904){\circle{1}}\put(11.25,19.4856){\circle{1}}\put(18.75,19.4856){\circle{1}}\put(15,25.9808){\circle{1}}\put(8,12.9904){\line(1,0){6.5}}\put(15.5,12.9904){\line(1,0){6.5}}\put(11.75,19.4856){\line(1,0){6.5}}\qbezier(7.75,13.4404)(7.75,13.4404)(11,19.0356)\qbezier(14.75,13.4404)(14.75,13.4404)(11.5,19.0356)\qbezier(15.25,13.4404)(15.25,13.4404)(18.5,19.0356)\qbezier(22.25,13.4404)(22.25,13.4404)(19,19.0356)\qbezier(11.5,19.9356)(11.5,19.9356)(14.75,25.5308)\qbezier(18.5,19.9356)(18.5,19.9356)(15.25,25.5308)
\put(30,0){\circle{1}}\put(37.5,0){\circle{1}}\put(45,0){\circle{1}}\put(33.75,6.49519){\circle{1}}\put(41.25,6.49519){\circle{1}}\put(37.5,12.9904){\circle{1}}\put(30.5,0){\line(1,0){6.5}}\put(38,0){\line(1,0){6.5}}\put(34.25,6.49519){\line(1,0){6.5}}\qbezier(30.25,0.45)(30.25,0.45)(33.5,6.04519)\qbezier(37.25,0.45)(37.25,0.45)(34,6.04519)\qbezier(37.75,0.45)(37.75,0.45)(41,6.04519)\qbezier(44.75,0.45)(44.75,0.45)(41.5,6.04519)\qbezier(34,6.94519)(34,6.94519)(37.25,12.5404)\qbezier(41,6.94519)(41,6.94519)(37.75,12.5404)
\put(45,0){\circle{1}}\put(52.5,0){\circle{1}}\put(60,0){\circle{1}}\put(48.75,6.49519){\circle{1}}\put(56.25,6.49519){\circle{1}}\put(52.5,12.9904){\circle{1}}\put(45.5,0){\line(1,0){6.5}}\put(53,0){\line(1,0){6.5}}\put(49.25,6.49519){\line(1,0){6.5}}\qbezier(45.25,0.45)(45.25,0.45)(48.5,6.04519)\qbezier(52.25,0.45)(52.25,0.45)(49,6.04519)\qbezier(52.75,0.45)(52.75,0.45)(56,6.04519)\qbezier(59.75,0.45)(59.75,0.45)(56.5,6.04519)\qbezier(49,6.94519)(49,6.94519)(52.25,12.5404)\qbezier(56,6.94519)(56,6.94519)(52.75,12.5404)
\put(37.5,12.9904){\circle{1}}\put(45,12.9904){\circle{1}}\put(52.5,12.9904){\circle{1}}\put(41.25,19.4856){\circle{1}}\put(48.75,19.4856){\circle{1}}\put(45,25.9808){\circle{1}}\put(38,12.9904){\line(1,0){6.5}}\put(45.5,12.9904){\line(1,0){6.5}}\put(41.75,19.4856){\line(1,0){6.5}}\qbezier(37.75,13.4404)(37.75,13.4404)(41,19.0356)\qbezier(44.75,13.4404)(44.75,13.4404)(41.5,19.0356)\qbezier(45.25,13.4404)(45.25,13.4404)(48.5,19.0356)\qbezier(52.25,13.4404)(52.25,13.4404)(49,19.0356)\qbezier(41.5,19.9356)(41.5,19.9356)(44.75,25.5308)\qbezier(48.5,19.9356)(48.5,19.9356)(45.25,25.5308)
\put(15,25.9808){\circle{1}}\put(22.5,25.9808){\circle{1}}\put(30,25.9808){\circle{1}}\put(18.75,32.476){\circle{1}}\put(26.25,32.476){\circle{1}}\put(22.5,38.9711){\circle{1}}\put(15.5,25.9808){\line(1,0){6.5}}\put(23,25.9808){\line(1,0){6.5}}\put(19.25,32.476){\line(1,0){6.5}}\qbezier(15.25,26.4308)(15.25,26.4308)(18.5,32.026)\qbezier(22.25,26.4308)(22.25,26.4308)(19,32.026)\qbezier(22.75,26.4308)(22.75,26.4308)(26,32.026)\qbezier(29.75,26.4308)(29.75,26.4308)(26.5,32.026)\qbezier(19,32.926)(19,32.926)(22.25,38.5211)\qbezier(26,32.926)(26,32.926)(22.75,38.5211)
\put(30,25.9808){\circle{1}}\put(37.5,25.9808){\circle{1}}\put(45,25.9808){\circle{1}}\put(33.75,32.476){\circle{1}}\put(41.25,32.476){\circle{1}}\put(37.5,38.9711){\circle{1}}\put(30.5,25.9808){\line(1,0){6.5}}\put(38,25.9808){\line(1,0){6.5}}\put(34.25,32.476){\line(1,0){6.5}}\qbezier(30.25,26.4308)(30.25,26.4308)(33.5,32.026)\qbezier(37.25,26.4308)(37.25,26.4308)(34,32.026)\qbezier(37.75,26.4308)(37.75,26.4308)(41,32.026)\qbezier(44.75,26.4308)(44.75,26.4308)(41.5,32.026)\qbezier(34,32.926)(34,32.926)(37.25,38.5211)\qbezier(41,32.926)(41,32.926)(37.75,38.5211)
\put(22.5,38.9711){\circle{1}}\put(30,38.9711){\circle{1}}\put(37.5,38.9711){\circle{1}}\put(26.25,45.4663){\circle{1}}\put(33.75,45.4663){\circle{1}}\put(30,51.9615){\circle{1}}\put(23,38.9711){\line(1,0){6.5}}\put(30.5,38.9711){\line(1,0){6.5}}\put(26.75,45.4663){\line(1,0){6.5}}\qbezier(22.75,39.4211)(22.75,39.4211)(26,45.0163)\qbezier(29.75,39.4211)(29.75,39.4211)(26.5,45.0163)\qbezier(30.25,39.4211)(30.25,39.4211)(33.5,45.0163)\qbezier(37.25,39.4211)(37.25,39.4211)(34,45.0163)\qbezier(26.5,45.9163)(26.5,45.9163)(29.75,51.5115)\qbezier(33.5,45.9163)(33.5,45.9163)(30.25,51.5115)
\put(60,0){\circle{1}}\put(67.5,0){\circle{1}}\put(75,0){\circle{1}}\put(63.75,6.49519){\circle{1}}\put(71.25,6.49519){\circle{1}}\put(67.5,12.9904){\circle{1}}\put(60.5,0){\line(1,0){6.5}}\put(68,0){\line(1,0){6.5}}\put(64.25,6.49519){\line(1,0){6.5}}\qbezier(60.25,0.45)(60.25,0.45)(63.5,6.04519)\qbezier(67.25,0.45)(67.25,0.45)(64,6.04519)\qbezier(67.75,0.45)(67.75,0.45)(71,6.04519)\qbezier(74.75,0.45)(74.75,0.45)(71.5,6.04519)\qbezier(64,6.94519)(64,6.94519)(67.25,12.5404)\qbezier(71,6.94519)(71,6.94519)(67.75,12.5404)
\put(75,0){\circle{1}}\put(82.5,0){\circle{1}}\put(90,0){\circle{1}}\put(78.75,6.49519){\circle{1}}\put(86.25,6.49519){\circle{1}}\put(82.5,12.9904){\circle{1}}\put(75.5,0){\line(1,0){6.5}}\put(83,0){\line(1,0){6.5}}\put(79.25,6.49519){\line(1,0){6.5}}\qbezier(75.25,0.45)(75.25,0.45)(78.5,6.04519)\qbezier(82.25,0.45)(82.25,0.45)(79,6.04519)\qbezier(82.75,0.45)(82.75,0.45)(86,6.04519)\qbezier(89.75,0.45)(89.75,0.45)(86.5,6.04519)\qbezier(79,6.94519)(79,6.94519)(82.25,12.5404)\qbezier(86,6.94519)(86,6.94519)(82.75,12.5404)
\put(67.5,12.9904){\circle{1}}\put(75,12.9904){\circle{1}}\put(82.5,12.9904){\circle{1}}\put(71.25,19.4856){\circle{1}}\put(78.75,19.4856){\circle{1}}\put(75,25.9808){\circle{1}}\put(68,12.9904){\line(1,0){6.5}}\put(75.5,12.9904){\line(1,0){6.5}}\put(71.75,19.4856){\line(1,0){6.5}}\qbezier(67.75,13.4404)(67.75,13.4404)(71,19.0356)\qbezier(74.75,13.4404)(74.75,13.4404)(71.5,19.0356)\qbezier(75.25,13.4404)(75.25,13.4404)(78.5,19.0356)\qbezier(82.25,13.4404)(82.25,13.4404)(79,19.0356)\qbezier(71.5,19.9356)(71.5,19.9356)(74.75,25.5308)\qbezier(78.5,19.9356)(78.5,19.9356)(75.25,25.5308)
\put(90,0){\circle{1}}\put(97.5,0){\circle{1}}\put(105,0){\circle{1}}\put(93.75,6.49519){\circle{1}}\put(101.25,6.49519){\circle{1}}\put(97.5,12.9904){\circle{1}}\put(90.5,0){\line(1,0){6.5}}\put(98,0){\line(1,0){6.5}}\put(94.25,6.49519){\line(1,0){6.5}}\qbezier(90.25,0.45)(90.25,0.45)(93.5,6.04519)\qbezier(97.25,0.45)(97.25,0.45)(94,6.04519)\qbezier(97.75,0.45)(97.75,0.45)(101,6.04519)\qbezier(104.75,0.45)(104.75,0.45)(101.5,6.04519)\qbezier(94,6.94519)(94,6.94519)(97.25,12.5404)\qbezier(101,6.94519)(101,6.94519)(97.75,12.5404)
\put(105,0){\circle{1}}\put(112.5,0){\circle{1}}\put(120,0){\circle{1}}\put(108.75,6.49519){\circle{1}}\put(116.25,6.49519){\circle{1}}\put(112.5,12.9904){\circle{1}}\put(105.5,0){\line(1,0){6.5}}\put(113,0){\line(1,0){6.5}}\put(109.25,6.49519){\line(1,0){6.5}}\qbezier(105.25,0.45)(105.25,0.45)(108.5,6.04519)\qbezier(112.25,0.45)(112.25,0.45)(109,6.04519)\qbezier(112.75,0.45)(112.75,0.45)(116,6.04519)\qbezier(119.75,0.45)(119.75,0.45)(116.5,6.04519)\qbezier(109,6.94519)(109,6.94519)(112.25,12.5404)\qbezier(116,6.94519)(116,6.94519)(112.75,12.5404)
\put(97.5,12.9904){\circle{1}}\put(105,12.9904){\circle{1}}\put(112.5,12.9904){\circle{1}}\put(101.25,19.4856){\circle{1}}\put(108.75,19.4856){\circle{1}}\put(105,25.9808){\circle{1}}\put(98,12.9904){\line(1,0){6.5}}\put(105.5,12.9904){\line(1,0){6.5}}\put(101.75,19.4856){\line(1,0){6.5}}\qbezier(97.75,13.4404)(97.75,13.4404)(101,19.0356)\qbezier(104.75,13.4404)(104.75,13.4404)(101.5,19.0356)\qbezier(105.25,13.4404)(105.25,13.4404)(108.5,19.0356)\qbezier(112.25,13.4404)(112.25,13.4404)(109,19.0356)\qbezier(101.5,19.9356)(101.5,19.9356)(104.75,25.5308)\qbezier(108.5,19.9356)(108.5,19.9356)(105.25,25.5308)
\put(75,25.9808){\circle{1}}\put(82.5,25.9808){\circle{1}}\put(90,25.9808){\circle{1}}\put(78.75,32.476){\circle{1}}\put(86.25,32.476){\circle{1}}\put(82.5,38.9711){\circle{1}}\put(75.5,25.9808){\line(1,0){6.5}}\put(83,25.9808){\line(1,0){6.5}}\put(79.25,32.476){\line(1,0){6.5}}\qbezier(75.25,26.4308)(75.25,26.4308)(78.5,32.026)\qbezier(82.25,26.4308)(82.25,26.4308)(79,32.026)\qbezier(82.75,26.4308)(82.75,26.4308)(86,32.026)\qbezier(89.75,26.4308)(89.75,26.4308)(86.5,32.026)\qbezier(79,32.926)(79,32.926)(82.25,38.5211)\qbezier(86,32.926)(86,32.926)(82.75,38.5211)
\put(90,25.9808){\circle{1}}\put(97.5,25.9808){\circle{1}}\put(105,25.9808){\circle{1}}\put(93.75,32.476){\circle{1}}\put(101.25,32.476){\circle{1}}\put(97.5,38.9711){\circle{1}}\put(90.5,25.9808){\line(1,0){6.5}}\put(98,25.9808){\line(1,0){6.5}}\put(94.25,32.476){\line(1,0){6.5}}\qbezier(90.25,26.4308)(90.25,26.4308)(93.5,32.026)\qbezier(97.25,26.4308)(97.25,26.4308)(94,32.026)\qbezier(97.75,26.4308)(97.75,26.4308)(101,32.026)\qbezier(104.75,26.4308)(104.75,26.4308)(101.5,32.026)\qbezier(94,32.926)(94,32.926)(97.25,38.5211)\qbezier(101,32.926)(101,32.926)(97.75,38.5211)
\put(82.5,38.9711){\circle{1}}\put(90,38.9711){\circle{1}}\put(97.5,38.9711){\circle{1}}\put(86.25,45.4663){\circle{1}}\put(93.75,45.4663){\circle{1}}\put(90,51.9615){\circle{1}}\put(83,38.9711){\line(1,0){6.5}}\put(90.5,38.9711){\line(1,0){6.5}}\put(86.75,45.4663){\line(1,0){6.5}}\qbezier(82.75,39.4211)(82.75,39.4211)(86,45.0163)\qbezier(89.75,39.4211)(89.75,39.4211)(86.5,45.0163)\qbezier(90.25,39.4211)(90.25,39.4211)(93.5,45.0163)\qbezier(97.25,39.4211)(97.25,39.4211)(94,45.0163)\qbezier(86.5,45.9163)(86.5,45.9163)(89.75,51.5115)\qbezier(93.5,45.9163)(93.5,45.9163)(90.25,51.5115)
\put(30,51.9615){\circle{1}}\put(37.5,51.9615){\circle{1}}\put(45,51.9615){\circle{1}}\put(33.75,58.4567){\circle{1}}\put(41.25,58.4567){\circle{1}}\put(37.5,64.9519){\circle{1}}\put(30.5,51.9615){\line(1,0){6.5}}\put(38,51.9615){\line(1,0){6.5}}\put(34.25,58.4567){\line(1,0){6.5}}\qbezier(30.25,52.4115)(30.25,52.4115)(33.5,58.0067)\qbezier(37.25,52.4115)(37.25,52.4115)(34,58.0067)\qbezier(37.75,52.4115)(37.75,52.4115)(41,58.0067)\qbezier(44.75,52.4115)(44.75,52.4115)(41.5,58.0067)\qbezier(34,58.9067)(34,58.9067)(37.25,64.5019)\qbezier(41,58.9067)(41,58.9067)(37.75,64.5019)
\put(45,51.9615){\circle{1}}\put(52.5,51.9615){\circle{1}}\put(60,51.9615){\circle{1}}\put(48.75,58.4567){\circle{1}}\put(56.25,58.4567){\circle{1}}\put(52.5,64.9519){\circle{1}}\put(45.5,51.9615){\line(1,0){6.5}}\put(53,51.9615){\line(1,0){6.5}}\put(49.25,58.4567){\line(1,0){6.5}}\qbezier(45.25,52.4115)(45.25,52.4115)(48.5,58.0067)\qbezier(52.25,52.4115)(52.25,52.4115)(49,58.0067)\qbezier(52.75,52.4115)(52.75,52.4115)(56,58.0067)\qbezier(59.75,52.4115)(59.75,52.4115)(56.5,58.0067)\qbezier(49,58.9067)(49,58.9067)(52.25,64.5019)\qbezier(56,58.9067)(56,58.9067)(52.75,64.5019)
\put(37.5,64.9519){\circle{1}}\put(45,64.9519){\circle{1}}\put(52.5,64.9519){\circle{1}}\put(41.25,71.4471){\circle{1}}\put(48.75,71.4471){\circle{1}}\put(45,77.9423){\circle{1}}\put(38,64.9519){\line(1,0){6.5}}\put(45.5,64.9519){\line(1,0){6.5}}\put(41.75,71.4471){\line(1,0){6.5}}\qbezier(37.75,65.4019)(37.75,65.4019)(41,70.9971)\qbezier(44.75,65.4019)(44.75,65.4019)(41.5,70.9971)\qbezier(45.25,65.4019)(45.25,65.4019)(48.5,70.9971)\qbezier(52.25,65.4019)(52.25,65.4019)(49,70.9971)\qbezier(41.5,71.8971)(41.5,71.8971)(44.75,77.4923)\qbezier(48.5,71.8971)(48.5,71.8971)(45.25,77.4923)
\put(60,51.9615){\circle{1}}\put(67.5,51.9615){\circle{1}}\put(75,51.9615){\circle{1}}\put(63.75,58.4567){\circle{1}}\put(71.25,58.4567){\circle{1}}\put(67.5,64.9519){\circle{1}}\put(60.5,51.9615){\line(1,0){6.5}}\put(68,51.9615){\line(1,0){6.5}}\put(64.25,58.4567){\line(1,0){6.5}}\qbezier(60.25,52.4115)(60.25,52.4115)(63.5,58.0067)\qbezier(67.25,52.4115)(67.25,52.4115)(64,58.0067)\qbezier(67.75,52.4115)(67.75,52.4115)(71,58.0067)\qbezier(74.75,52.4115)(74.75,52.4115)(71.5,58.0067)\qbezier(64,58.9067)(64,58.9067)(67.25,64.5019)\qbezier(71,58.9067)(71,58.9067)(67.75,64.5019)
\put(75,51.9615){\circle{1}}\put(82.5,51.9615){\circle{1}}\put(90,51.9615){\circle{1}}\put(78.75,58.4567){\circle{1}}\put(86.25,58.4567){\circle{1}}\put(82.5,64.9519){\circle{1}}\put(75.5,51.9615){\line(1,0){6.5}}\put(83,51.9615){\line(1,0){6.5}}\put(79.25,58.4567){\line(1,0){6.5}}\qbezier(75.25,52.4115)(75.25,52.4115)(78.5,58.0067)\qbezier(82.25,52.4115)(82.25,52.4115)(79,58.0067)\qbezier(82.75,52.4115)(82.75,52.4115)(86,58.0067)\qbezier(89.75,52.4115)(89.75,52.4115)(86.5,58.0067)\qbezier(79,58.9067)(79,58.9067)(82.25,64.5019)\qbezier(86,58.9067)(86,58.9067)(82.75,64.5019)
\put(67.5,64.9519){\circle{1}}\put(75,64.9519){\circle{1}}\put(82.5,64.9519){\circle{1}}\put(71.25,71.4471){\circle{1}}\put(78.75,71.4471){\circle{1}}\put(75,77.9423){\circle{1}}\put(68,64.9519){\line(1,0){6.5}}\put(75.5,64.9519){\line(1,0){6.5}}\put(71.75,71.4471){\line(1,0){6.5}}\qbezier(67.75,65.4019)(67.75,65.4019)(71,70.9971)\qbezier(74.75,65.4019)(74.75,65.4019)(71.5,70.9971)\qbezier(75.25,65.4019)(75.25,65.4019)(78.5,70.9971)\qbezier(82.25,65.4019)(82.25,65.4019)(79,70.9971)\qbezier(71.5,71.8971)(71.5,71.8971)(74.75,77.4923)\qbezier(78.5,71.8971)(78.5,71.8971)(75.25,77.4923)
\put(45,77.9423){\circle{1}}\put(52.5,77.9423){\circle{1}}\put(60,77.9423){\circle{1}}\put(48.75,84.4375){\circle{1}}\put(56.25,84.4375){\circle{1}}\put(52.5,90.9327){\circle{1}}\put(45.5,77.9423){\line(1,0){6.5}}\put(53,77.9423){\line(1,0){6.5}}\put(49.25,84.4375){\line(1,0){6.5}}\qbezier(45.25,78.3923)(45.25,78.3923)(48.5,83.9875)\qbezier(52.25,78.3923)(52.25,78.3923)(49,83.9875)\qbezier(52.75,78.3923)(52.75,78.3923)(56,83.9875)\qbezier(59.75,78.3923)(59.75,78.3923)(56.5,83.9875)\qbezier(49,84.8875)(49,84.8875)(52.25,90.4827)\qbezier(56,84.8875)(56,84.8875)(52.75,90.4827)
\put(60,77.9423){\circle{1}}\put(67.5,77.9423){\circle{1}}\put(75,77.9423){\circle{1}}\put(63.75,84.4375){\circle{1}}\put(71.25,84.4375){\circle{1}}\put(67.5,90.9327){\circle{1}}\put(60.5,77.9423){\line(1,0){6.5}}\put(68,77.9423){\line(1,0){6.5}}\put(64.25,84.4375){\line(1,0){6.5}}\qbezier(60.25,78.3923)(60.25,78.3923)(63.5,83.9875)\qbezier(67.25,78.3923)(67.25,78.3923)(64,83.9875)\qbezier(67.75,78.3923)(67.75,78.3923)(71,83.9875)\qbezier(74.75,78.3923)(74.75,78.3923)(71.5,83.9875)\qbezier(64,84.8875)(64,84.8875)(67.25,90.4827)\qbezier(71,84.8875)(71,84.8875)(67.75,90.4827)
\put(52.5,90.9327){\circle{1}}\put(60,90.9327){\circle{1}}\put(67.5,90.9327){\circle{1}}\put(56.25,97.4279){\circle{1}}\put(63.75,97.4279){\circle{1}}\put(60,103.923){\circle{1}}\put(53,90.9327){\line(1,0){6.5}}\put(60.5,90.9327){\line(1,0){6.5}}\put(56.75,97.4279){\line(1,0){6.5}}\qbezier(52.75,91.3827)(52.75,91.3827)(56,96.9779)\qbezier(59.75,91.3827)(59.75,91.3827)(56.5,96.9779)\qbezier(60.25,91.3827)(60.25,91.3827)(63.5,96.9779)\qbezier(67.25,91.3827)(67.25,91.3827)(64,96.9779)\qbezier(56.5,97.8779)(56.5,97.8779)(59.75,103.473)\qbezier(63.5,97.8779)(63.5,97.8779)(60.25,103.473)

\put(60,103.923){\circle*{2}}\put(60,90.9327){\circle*{2}}\put(56.25,84.4375){\circle*{2}}\put(63.75,84.4375){\circle*{2}}
\put(45,64.9519){\circle*{2}}\put(41.25,58.4567){\circle*{2}}\put(48.75,58.4567){\circle*{2}}
\put(75,64.9519){\circle*{2}}\put(71.25,58.4567){\circle*{2}}\put(78.75,58.4567){\circle*{2}}

\put(30,38.9711){\circle*{2}}\put(26.25,32.476){\circle*{2}}\put(33.75,32.476){\circle*{2}}
\put(15,12.9904){\circle*{2}}\put(11.25,6.49519){\circle*{2}}\put(0,0) {\circle*{2}}\put(18.75,6.49519){\circle*{2}}
\put(45,12.9904){\circle*{2}}\put(41.25,6.49519){\circle*{2}}\put(48.75,6.49519){\circle*{2}}

\put(90,38.9711){\circle*{2}}\put(86.25,32.476){\circle*{2}}\put(93.75,32.476){\circle*{2}}
\put(75,12.9904){\circle*{2}}\put(71.25,6.49519){\circle*{2}}\put(78.75,6.49519){\circle*{2}}
\put(105,12.9904){\circle*{2}}\put(101.25,6.49519){\circle*{2}}\put(108.75,6.49519){\circle*{2}}\put(120,0){\circle*{2}}

\end{picture}

\caption{A largest mutual visibility set and the largest 
general position set of
Sierpi\'nski triangle graph $ST_3^4$ } \label{mv4}
\end{figure}
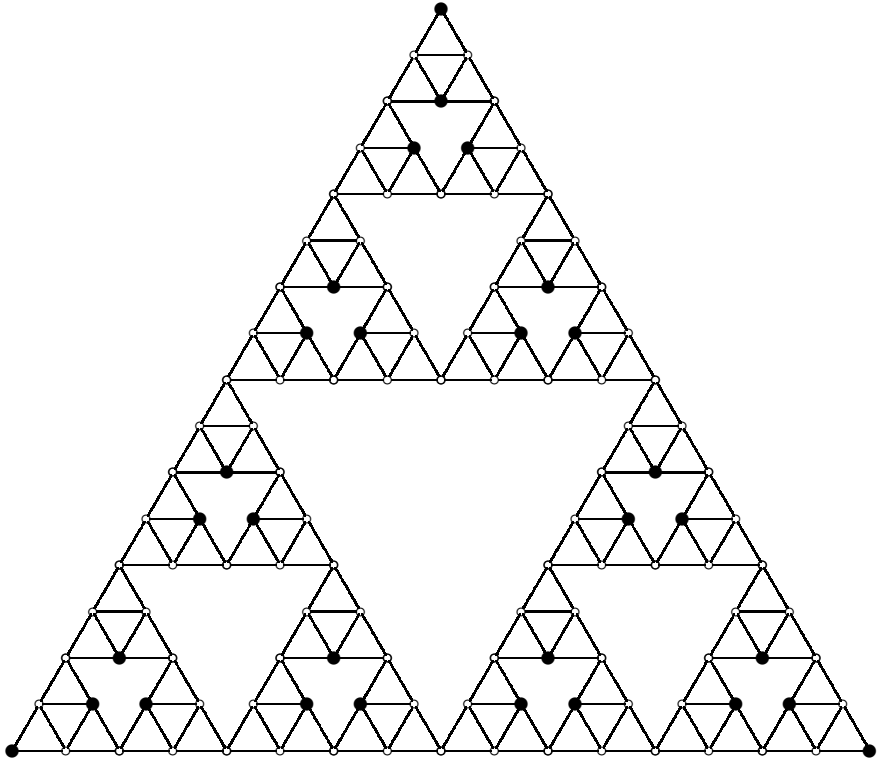

\begin{thm} \label{mutual}
If $n\ge 1$, then $\mu(ST_3^n) = 3^{n-1}+3$.
\end{thm}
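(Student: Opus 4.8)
Since Proposition~\ref{zgornjaM} already gives $\mu(ST_3^n)\le 3^{n-1}+3$, the plan is to supply a matching lower bound by exhibiting a mutual-visibility set of that size. For $n\ge 2$ I would take
\[
M_n \;=\; X(ST_3^n)\ \cup\ \bigcup_{H_2\in\mathcal{H}_2^n} P_{H_2},
\]
the three extreme vertices of $ST_3^n$ together with the proper vertices of every copy of $ST_3^2$. Because proper vertices are non-extreme and the $3^{n-2}$ copies of $ST_3^2$ meet only in extreme vertices, the sets $P_{H_2}$ are pairwise disjoint and disjoint from $X(ST_3^n)$, so $|M_n| = 3 + 3\cdot 3^{n-2} = 3^{n-1}+3$. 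For $n=1$ the construction does not apply and I would instead exhibit four mutually visible vertices directly, as in Fig.~\ref{four}. Everything then reduces to proving that $M_n$ is a mutual-visibility set.

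I would argue by induction on $n$, with $n=1,2$ as base cases (the case $n=2$ being verified directly, essentially through Proposition~\ref{proper}, or by the computer search of Subsection~2.3). For the inductive step $n\ge 3$, write $ST_3^n$ as three copies $A,B,C$ of $ST_3^{n-1}$, where $A$ and $B$ share the extreme vertex $c_{AB}$, and similarly $c_{AC}$, $c_{BC}$. Every copy of $ST_3^2$ lies inside a single copy $A,B$ or $C$, so $M_n\cap V(A)$ consists of the proper vertices inside $A$ together with the one extreme vertex of $A$ that is also extreme in $ST_3^n$; that is, $M_n\cap V(A) = M_{n-1}^A\setminus\{c_{AB},c_{AC}\}$, where $M_{n-1}^A$ denotes the analogous set built inside $A$. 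By the induction hypothesis $M_{n-1}^A$ is a mutual-visibility set of $A$, hence so is its subset $M_n\cap V(A)$; the crucial point is that the two shared (internal) corners $c_{AB},c_{AC}$ are deliberately \emph{excluded} from $M_n$.

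Now take $u,v\in M_n$. If both lie in one copy, say $A$, then using that $A$ is an isometric subgraph of $ST_3^n$, a shortest $u,v$-path in $A$ is shortest in $ST_3^n$, and the within-$A$ visibility just established produces an $(M_n\cap V(A))$-free such path, which—lying entirely in $A$—is $M_n$-free. If $u\in A$ and $v\in B$, the key distance fact is that $c_{AB}$ lies on a shortest $u,v$-path, so $d(u,v)=d(u,c_{AB})+d(c_{AB},v)$; any alternative route must leave $A$ through $c_{AC}$ and traverse $C$, which is longer by Proposition~\ref{razdalje}. Since $u$ and the extreme vertex $c_{AB}$ are mutually visible in $A$, there is a shortest $u,c_{AB}$-path in $A$ avoiding $M_{n-1}^A\setminus\{u,c_{AB}\}\supseteq (M_n\cap V(A))\setminus\{u\}$, and symmetrically a shortest $c_{AB},v$-path in $B$ avoiding $(M_n\cap V(B))\setminus\{v\}$. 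Concatenating these at $c_{AB}$—which is not in $M_n$—yields a shortest $u,v$-path meeting $M_n$ only in $\{u,v\}$, completing the induction.

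The main obstacle is precisely this cross-copy case, and the whole argument hinges on the choice of $M_n$: retaining only the global extreme vertices while discarding the internal corners $c_{AB},c_{AC},c_{BC}$ leaves those corners free to serve as ``gateways'' through which inter-copy shortest paths can be routed without meeting $M_n$. As supporting distance lemmas I would need—either cited from \cite{zemlja} or derived from Proposition~\ref{razdalje}—that each copy of $ST_3^{n-1}$ is isometric in $ST_3^n$ and that the corner shared by two copies lies on a shortest path between their vertices; checking that the concatenated path is genuinely shortest and genuinely $M_n$-free is where the care is required.
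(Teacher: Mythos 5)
Your construction, count, base cases, and within-copy argument all match the paper's, but the cross-copy case contains a genuine gap: the ``key distance fact'' that for $u\in A$ and $v\in B$ the shared corner $c_{AB}$ lies on a shortest $u,v$-path is false in general, and it fails precisely for vertices of your set $M_n$. Concretely, let $n\ge 5$, let $H_2$ be the copy of $ST_3^2$ inside $A$ having $c_{AC}$ as an extreme vertex, and let $u$ be the proper vertex of $H_2$ at distance $2$ from $c_{AC}$; choose $v\in B$ symmetrically, a proper vertex at distance $2$ from $c_{BC}$. Both $u$ and $v$ lie in $M_n$. Routing through $C$ gives $d(u,v)\le 2+2^{n-1}+2=2^{n-1}+4$, since $c_{AC}$ and $c_{BC}$ are extreme vertices of $C$ and so $d(c_{AC},c_{BC})=2^{n-1}$ by Proposition~\ref{razdalje}. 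On the other hand, by the triangle inequality inside $A$ we have $d(u,c_{AB})\ge d(c_{AC},c_{AB})-d(c_{AC},u)=2^{n-1}-2$, and likewise $d(c_{AB},v)\ge 2^{n-1}-2$, so every $u,v$-path through $c_{AB}$ has length at least $2^{n}-4$, which exceeds $2^{n-1}+4$ as soon as $2^{n-1}>8$. Hence for $n\ge 5$ no shortest $u,v$-path passes through $c_{AB}$: your concatenated path through $c_{AB}$ is not a shortest path and cannot witness $M_n$-visibility. The parenthetical justification you give (``which is longer by Proposition~\ref{razdalje}'') is exactly what fails --- the detour through the third copy can be the strictly shorter route.

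This is not a removable technicality; it is where the paper's proof does its real work. The paper never decomposes into copies of $ST_3^{n-1}$. It argues at the level of $ST_3^2$ copies: a shortest $u,v$-path between distinct copies $H_2$ and $H_2'$ enters and leaves every intermediate copy $H_2''$ of $ST_3^2$ through two of its extreme vertices $p_k'',p_\ell''$, and because $M\cap V(H_2'')$ consists only of proper vertices of $H_2''$, which by definition (equivalently, by Proposition~\ref{proper}) avoid the interval $I(p_k'',p_\ell'')$, the crossing of $H_2''$ can always be made $M$-free. To repair your induction you would need exactly this ingredient in the case where the shortest route runs through the third copy $C$: such a path crosses $C$ (and the $ST_3^2$ copies inside it) between corner vertices, and you must show it can do so while avoiding the proper vertices placed inside $C$ --- which amounts to reproducing the paper's argument rather than a genuinely shorter induction.
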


\begin{proof}
Remind that we show in Proposition \ref{zgornjaM} that $\mu(ST_3^n) \le 3^{n-1}+3$. Hence,  
we have to show that there exists a mutual-visibility set of $ST_3^n$ with 
$3^{n-1}+3$ vertices for every $n\ge 1$. 
Since it is not difficult to construct  a mutual-visibility set with  
4 vertices of  $ST_3^1$ (see Fig. \ref{four}),  let assume that $n\ge 2$.
We now construct a mutual-visibility set $M$ of $ST_3^n$ as follows. 
For every copy of $ST_3^2$, say $H_2$, in $ST_3^n$  we insert all three proper vertices of $H_2$ in $M$ as well as all three extreme vertices of 
$ST_3^n$, i.e., 
$$M = (\bigcup_{H_2 \in {\cal H}_2^n}  P_{H_2}) \cup X(ST_3^n).$$ 


Clearly, $M$ contains exactly  $3^{n-1}+3$ vertices. 

We now demonstrate that \( M \) is a mutual-visibility set of \( ST_3^n \). Specifically, we need to show that every pair of vertices \( u, v \in M \) are \( M \)-visible in \( ST_3^n \).

   If \( u \) and \( v \) belong to the same copy of \( ST_3^2 \), the claim is straightforward. In this case, every pair of vertices in the same \( ST_3^2 \) copy is \( M \)-visible by the construction of \( M \).

   Suppose \( u \) and \( v \) belong to different copies of \( ST_3^2 \), say \( H_2 \) and \( H_2' \), respectively. If neither \( H_2 \) nor \( H_2' \) contains an extreme vertex of \( ST_3^n \), then \( M \) is \( H_2 \)- and \( H_2' \)-proper. This means that every vertex \( u \in M \cap V(H_2) \) and every vertex \( v \in M \cap V(H_2') \) are \( M \)-visible with respect to the extreme vertices \( p_i \) of \( H_2 \) and \( p_i' \) of \( H_2' \), respectively, where \( i \in [3]_0 \). Every shortest path between \( u \) and \( v \) must pass through some extreme vertex \( p_i \) of \( H_2 \) and some extreme vertex \( p_j' \) of \( H_2' \). Therefore, \( u \) and \( v \) are \( M \)-visible if 
   \( H_2 \) and \( H_2' \) are adjacent.

   If a shortest path between \( u \) and \( v \) crosses another copy of \( ST_3^2 \), say \( H_2'' \), let \( p_i'' \) be its extreme vertices. Since \( H_2'' \) is also proper, its extreme vertices \( p_k'' \) and \( p_{\ell}'' \) that the path crosses are \( M \)-visible. Thus, any path crossing \( H_2'' \) will still ensure that \( u \) and \( v \) are \( M \)-visible.

In conclusion, all cases confirm that \( u \) and \( v \) are \( M \)-visible, proving that \( M \setminus X(ST_3^n) \) is indeed a mutual-visibility set of \( ST_3^n \).

If $M\cap V(H_2)$ or $M\cap V(H_2')$ contains an extreme vertex of $ST_3^n$, then note that  
every $u,v$-path contains $p_i$ for some $i \in [3]_0$ and  $p_j'$ for some $j \in [3]_0$ such
that $p_i$ and $p_j'$ are not extreme vertices of $ST_3^n$. Moreover, 
$u$ and  $p_i$ (resp. $v$ and $p_j'$) are clearly $M$-visible even 
if $u$ (resp. $v$) is an extreme vertex of $ST_3^n$. 

Since we showed that  every $u,v  \in M$ are $M$-visible in $ST_3^n$, the proof is complete.
\end{proof}

The next result shows that a mutual-visibility set of 
$ST_3^n$, as defined in  the proof of Theorem \ref{mutual}, is also a 
general position set of $ST_3^n$.
A general position set of $G$ of cardinality $gp(G)$ is 
called a \textit{gp-set} of $G$.

\begin{thm}
If $n \ge 1$, then 
\begin{displaymath}
gp(ST_3^n) =
        \left \{ \begin{array}{llll}
              3,  &  n = 1   \\
              3^{n-1}+3,  &  {\rm otherwise}   \\
             \end{array}. \right.
\end{displaymath}
Moreover, if $n\ge 2$, then $M$ is gp-set of $ST_3^n$ if and only if
$M = (\bigcup_{H \in {\cal H}_2^n}  P_H)  \cup X(ST_3^n)$.
\end{thm}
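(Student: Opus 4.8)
The plan is to pin down \(gp(ST_3^n)\) by sandwiching it and then to establish the characterization separately. Since every general position set is a mutual-visibility set, Theorem \ref{mutual} already gives the upper bound \(gp(ST_3^n) \le \mu(ST_3^n) = 3^{n-1}+3\) for all \(n \ge 1\). For the matching lower bound when \(n \ge 2\), I would prove that the very set \(M = (\bigcup_{H \in {\cal H}_2^n} P_H) \cup X(ST_3^n)\) constructed in the proof of Theorem \ref{mutual} is in fact a general position set, not merely a mutual-visibility set; this simultaneously supplies the lower bound and the ``if'' direction of the characterization. The case \(n=1\) must be handled by hand: a triangle of \(ST_3^1\) is a general position set, so \(gp(ST_3^1) \ge 3\), while a short inspection shows that any four vertices of \(ST_3^1\) contain a collinear triple, giving \(gp(ST_3^1)=3\) (here \(gp<\mu\), so the bound from Theorem \ref{mutual} is not tight).

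To verify that \(M\) is a general position set I would show that \(w \notin I(u,v)\) for all distinct \(u,v,w \in M\). The argument rests on the geodesic structure of \(ST_3^n\). First, each copy of \(ST_3^2\) (indeed each copy of \(ST_3^m\)) is geodesically convex, which follows from the additive distance relations in Propositions \ref{razdalje} and \ref{razdalje2}; hence shortest paths between vertices of a common copy stay inside it. Second, any shortest path joining vertices of two distinct copies enters and leaves every intermediate copy through its extreme vertices and, inside such a copy, runs along a geodesic between two extreme vertices; by the definition of proper vertices this geodesic avoids \(P_H\), and since the three global corners have degree \(2\) no shortest path detours through them unless they are endpoints. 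Consequently the only vertices of \(M\) that a shortest \(u,v\)-path can meet lie in the two copies containing \(u\) and \(v\), which reduces the claim to a finite verification inside a single \(ST_3^2\): that its three proper vertices together with at most one extreme vertex are in general position, and that the interval from a proper vertex to an extreme vertex meets \(M\) only at its endpoints. This yields \(gp(ST_3^n)=3^{n-1}+3\) for \(n\ge 2\).

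For the ``only if'' direction, let \(M'\) be any general position set of size \(3^{n-1}+3\); since this equals \(\mu(ST_3^n)\), \(M'\) is in particular a maximum mutual-visibility set, and I would exploit the equality conditions hidden in the proof of Proposition \ref{zgornjaM}. The strict inequalities obtained in Cases A and B force \(M'\) into Case C; tracking equality in \(|M'| \le \sum_i |M' \cap V(H_i)| \le 3\,(3^{n-2}+1)\) over the three copies \(H_i\) of \(ST_3^{n-1}\) then shows that \(M'\) contains none of the three gluing vertices shared by these copies and that \(|M' \cap V(H_i)| = 3^{n-2}+1\) for each \(i\). Using geodesic convexity of \(H_i\), the restriction \(M' \cap V(H_i)\) is itself a general position set of \(H_i \cong ST_3^{n-1}\), and I would propagate this recursion down to the \(ST_3^2\) level, where Proposition \ref{proper} and the underlying \(H_2\)-proper condition force the selected vertices of each corner-free copy to be exactly its proper vertices. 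The base case \(n=2\), that the only general position set of \(ST_3^2\) of size \(6\) is its three proper vertices together with its three extreme vertices, is settled by the finite computation of Subsection 2.3.

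The main obstacle is exactly this recursion: the restriction \(M' \cap V(H_i)\) has size \(3^{n-2}+1\) rather than the maximum \(gp(ST_3^{n-1}) = 3^{n-2}+3\), so it is a general position set of \(H_i\) but not a largest one, and the inductive characterization cannot be applied to it verbatim. The resolution I would pursue is to reattach the two omitted extreme (gluing) vertices \(p',p''\) of \(H_i\): by Proposition \ref{edenM}, \((M' \cap V(H_i)) \cup \{p',p''\}\) is a largest mutual-visibility set of \(H_i\), and I would upgrade it to a largest general position set by ruling out any betweenness through \(p'\) or \(p''\). The key is that such a local violation would not be confined to \(H_i\): because Case C guarantees \(M'\)-vertices in the neighbouring copies across \(p'\) and \(p''\), any collinear triple inside \(H_i\) would extend through these corners to a collinear triple of \(M'\) in \(ST_3^n\), contradicting the \emph{ambient} general position property of \(M'\). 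This severs the apparent circularity, so \((M' \cap V(H_i)) \cup \{p',p''\}\) is a largest general position set of \(H_i\); by the inductive uniqueness it is the canonical one, and deleting \(p',p''\) identifies \(M' \cap V(H_i)\) with \(M \cap V(H_i)\). Assembling the three copies and reinstating the global corners then gives \(M' = M\).
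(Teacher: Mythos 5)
Your proposal follows the paper's route almost step for step: the upper bound via $gp(ST_3^n) \le \mu(ST_3^n)$ and Theorem \ref{mutual}, the hand treatment of $n=1$ through the structure of the $4$-vertex mutual-visibility sets, the verification that the canonical set $M$ is a general position set by routing all geodesics through extreme vertices of the $ST_3^2$ copies, and a uniqueness argument that forces Case C of Proposition \ref{zgornjaM}, applies Proposition \ref{edenM} to $(M' \cap V(H_i)) \cup \{p',p''\}$, and invokes the inductive characterization for $ST_3^{n-1}$. Your handling of the delicate point --- that Proposition \ref{edenM} only yields a \emph{mutual-visibility} set of $H_i$, while the induction needs a \emph{general position} set --- is in fact more explicit than the paper's: you argue that a collinear triple through $p'$ or $p''$ inside $H_i$ would extend, via the $M'$-vertices guaranteed by Case C in the adjacent copies and Proposition \ref{razdalje2}, to a collinear triple of $M'$ in $ST_3^n$, contradicting ambient general position. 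The paper asserts this implication with far less justification.

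There is, however, a genuine gap in where your induction bottoms out. You claim the recursion runs down to the base case $n=2$, with Case C forced by ``the strict inequalities obtained in Cases A and B.'' Those inequalities fail for $n=3$ and $n=4$: Case A gives $|M| \le 2\cdot 3^{n-2} + 3^{n-3} + 15$, and $2\cdot 3^{n-2}+3^{n-3}+15 < 3^{n-1}+3$ is equivalent to $3^{n-3} > 6$, i.e.\ to $n \ge 5$; for $n=3$ this reads $22 < 12$ and for $n=4$ it reads $36 < 30$, both false. Worse, Case B's decomposition involves copies of $ST_3^{n-4}$, which does not even exist when $n=3$. This is precisely why the paper verifies $n=3,4$ by computer search (both inside Proposition \ref{zgornjaM} and in the uniqueness claim of this theorem) and starts the induction only at $n \ge 5$. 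As written, your induction step is invalid at $n=3$ and $n=4$, so the recursion never legitimately reaches your base case; the repair is exactly the paper's device: establish the characterization for $n=3,4$ by separate (computational) verification and begin the induction at $n=5$.
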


\begin{proof}
Since $gp(ST_3^n) \le \mu(ST_3^n)$, the upper bounds are given by Theorem \ref{mutual}. 

Suppose first that $ST_3^1$ admits a general position set with 4 vertices. 
Since every mutual-visibility set is also a general position set, remind that $\mu(ST_3^1) = 4$. As already noted, mutual-visibility sets of 
$ST_3^1$ with 4 vertices are of the form depicted in Fig. \ref{four}. 
Since it is not difficult to see that a set of this form is not a 
general position set, we have that $gp(ST_3^1) \le 3$. The upper bound can be obtained by noting that all three extreme vertices of $ST_3^1$ clearly form its general position set. 

Let $n \ge 2$ and let us define the set $M$ as in the proof of Theorem \ref{mutual}, i.e. 
$M = (\bigcup_{H_2 \in {\cal H}_2^n}  P_{H_2}  \cup X(ST_3^n)$. 

We will show that $M$ is a general position set of $ST_3^n$,
i.e., we have to show that for
every pair of vertices
$u,v  \in M$, their interval $I(u,v)$ does not contain a vertex of 
$M$ apart of $u$ and $v$. 
If $u$ and $v$ belong to the same copy of $ST_3^2$, say $H_2$, it is not difficult to see that this claim holds. 
Moreover, if $p$ and $q$ are extreme vertices of $H_2$ and $u \in M \cap V(H_2)$, by the definition of the set $M$ we have $I(u,p)\cap (M\setminus \{u, p \}) = \emptyset$ and $I(p,q)\cap (M\setminus \{p, q \}) = \emptyset$. 

If \( u \) and \( v \) belong to different copies of \( ST_3^2 \), say \( H_2 \) and \( H_2' \) respectively, and \( H_2" \) denotes a copy of \( ST_3^2 \) that may intersect a shortest \( u,v \)-path, then note that in the proof of Theorem \ref{mutual}, it was shown that if a shortest \( u,v \)-path crosses \( H_2" \), then \( I(p_k", p_{\ell}") \cap (M \setminus \{p_k", p_{\ell}"\}) = \emptyset \), where \( p_k" \) and \( p_{\ell}" \) are the extreme vertices of \( H_2" \) that the shortest \( u,v \)-path crosses.

Since it follows that $I(u,v)\cap (M\setminus \{u, v \}) = \emptyset$, 
the proof of the first part of the theorem is complete.

To show that $(\bigcup_{H_2 \in {\cal H}_2^n}  P_{H_2})  \cup X(ST_3^n)$ is 
the unique general position set of the largest cardinality in  $ST_3^n$, we first examine $ST_3^2$ with a general position set $M$. 
Let $H$ be a copy of $ST_3^1$ in $ST_3^2$, 
$\{u\} = V(H) \cap P_{ST_3^2}$ and $\{p\} = X(H) \cap X(ST_3^2)$. 
We do not give all the details here, but it can be seen that if $M$ admits 
a vertex that does not belong to 
$\{u, p\}$, then $M$ possesses at most 5 vertices.  Thus, the claim for $ST_3^2$ holds.

For \(n = 3, 4\), the claim is confirmed by computer verification, showing that if a general position set \(M'\) of \(ST_3^n\) contains a vertex that does not belong to \(M\), then the cardinality of \(M'\) is at most \(3^{n-2} + 2\).

For \(n \ge 5\), the proof proceeds by induction on \(n\). As shown in the proof of Proposition \ref{zgornjaM}, if \(H\) is an arbitrary copy of \(ST_3^{n-1}\) in \(ST_3^n\), then \(ST_3^n\) admits a mutual-visibility set \(M\) of cardinality \(3^{n-1} + 3\) only if \(M \cap V(H)\) contains exactly \(3^{n-2} + 1\) vertices. Furthermore, if \(H'\) and \(H''\) are copies of \(ST_3^{n-2}\) adjacent to \(H\), then \(V(H') \cap M \neq \emptyset\) and \(V(H'') \cap M \neq \emptyset\).

Let \(V(H) \cap V(H') = \{p\}\) and \(V(H) \cap V(H'') = \{q\}\), and consider \(M\) as a general position set of cardinality \(3^{n-1} + 3\) in \(ST_3^n\). Since every general position set is also a mutual-visibility set, we can apply the proof of Proposition \ref{edenM} to deduce that \(M \cap V(H)\) contains \(3^{n-2} + 1\) vertices only if \((M \cap V(H)) \cup \{p, q\}\) forms a general position (and mutual-visibility) set in \(H\).

Thus, \(|(M \cap V(H)) \cup \{p, q\}| = 3^{n-1} + 3\), and by the inductive hypothesis, it follows that \((M \cap V(H)) \cup \{p, q\} = (\bigcup_{H_2 \in {\cal H}_2^{n-1}} P_{H_2}) \cup X(ST_3^{n-1})\).

Since the above conclusion holds for every copy of \(ST_3^{n-1}\), the proof is complete.
\end{proof}

\section{Varieties}
We first consider outer mutual-visibility sets of Sierpi\'nski  triangle graphs, which can be studied in a similar manner to "regular" mutual-visibility sets.

\begin{prop} \label{edenO}
Let $n \ge 3$ be an integer and $M$ an outer mutual-visibility set of $ST_3^n$. 
If $H$ is a copy of $ST_3^{n-1}$ and $H'$ a copy of $ST_3^{n-2}$ 
adjacent to $H$ in $ST_3^n$ such that 
$(M \cap V(H))  \cup  \{p \}$ and 
$M \cap (V(H')\setminus X(H')) \not = \emptyset$, then 
$(M \cap (V(H)\setminus X(H)))  \cup  \{p\}$ is  an outer mutual-visibility set of $H$
\end{prop}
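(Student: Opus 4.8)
The plan is to follow the template of Proposition~\ref{edenM}, but to upgrade it so that it certifies the two conditions required of an outer mutual-visibility set of $H$, namely visibility inside $N:=(M\cap(V(H)\setminus X(H)))\cup\{p\}$ and visibility between $N$ and $V(H)\setminus N$; here $\{p\}=V(H)\cap V(H')$ is the vertex shared by the two adjacent copies. The first observation is that these two requirements can be merged: it suffices to prove that for every $a\in N$ and every $b\in V(H)\setminus\{a\}$ the vertices $a$ and $b$ are $N$-visible in $H$. A second, decisive observation is that because $M$ is an \emph{outer} mutual-visibility set of $ST_3^n$, every vertex of $M$ is $M$-visible to \emph{every} vertex of $ST_3^n$ (if the other endpoint lies in $M$ we invoke the mutual-visibility clause, otherwise the outer clause). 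Note also that $N\setminus\{p\}=M\cap(V(H)\setminus X(H))\subseteq M$, so up to the single vertex $p$ the set $N$ consists of $M$-vertices; consequently an $M$-free path in $H$ is automatically $N$-free as soon as it also avoids $p$ as an internal vertex.

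I would split the argument according to whether $a=p$. If $a\neq p$, then $a\in M$, so by the universal visibility noted above there is an $M$-free shortest $a,b$-path $Q$ in $ST_3^n$; since $a,b\in V(H)$ and copies of $ST_3^{n-1}$ are convex, $Q$ lies inside $H$. Being $M$-free, $Q$ avoids every vertex of $N\setminus\{p\}$ other than its endpoints, so the only way $Q$ could fail to be $N$-free is by passing through $p$ as an internal vertex. If $a=p$, I would instead reuse the mechanism of Proposition~\ref{edenM}: fix $u\in M\cap(V(H')\setminus X(H'))$ (which exists by hypothesis), take an $M$-free shortest $u,b$-path, and invoke Proposition~\ref{razdalje2} to conclude that this path meets $X(H)$ only in $p$ and hence enters $H$ through $p$. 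Its tail from $p$ to $b$ is then a shortest $p,b$-path lying in $H$, free of $M\setminus\{b\}$, and having $p$ as an endpoint, hence $N$-free.

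The main obstacle is exactly the internal occurrence of $p$ flagged in the case $a\neq p$: a priori an $M$-free geodesic between two interior vertices of $H$ could run through the corner $p$. The key lemma I would isolate to remove it is that an extreme vertex of $H$ is never an \emph{internal} vertex of any geodesic of $H$. This holds because an extreme vertex $x$ of $H\cong ST_3^{n-1}$ has exactly two neighbours in $H$, and these two neighbours are mutually adjacent (they are the other two vertices of the bottom-level triangle containing $x$); any path using $x$ internally would traverse the length-two detour through $x$ between these two adjacent neighbours and so could be shortened, contradicting minimality. Applying this with $x=p$, and using that $p\notin\{a,b\}$ whenever $a\neq p$ and $b\neq p$ (the case $b=p$ being harmless, since then $p$ is an endpoint of $Q$), shows $p\notin Q$, so $Q$ is $N$-free. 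Once both cases are settled, $N$ is an outer mutual-visibility set of $H$, which is what we want. The only facts needed beyond Propositions~\ref{razdalje} and \ref{razdalje2} are the convexity of sub-copies in $ST_3^n$ (already used implicitly in Proposition~\ref{edenM}) and this corner lemma.
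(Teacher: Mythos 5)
Your proposal is correct, and its engine is the same as the paper's: fix $u\in M\cap(V(H')\setminus X(H'))$, use the outer mutual-visibility of $M$ to obtain an $M$-free $u,b$-geodesic for every $b\in V(H)$, and apply Proposition~\ref{razdalje2} to see that this geodesic enters $H$ at $p$, so that its tail certifies the visibility of the pair $(p,b)$. The genuine difference is in scope: the paper's proof verifies only the pairs involving $p$ and then concludes with ``it follows,'' silently assuming that for $a\in M\cap(V(H)\setminus X(H))$ and $b\in V(H)$ an $M$-free geodesic of $ST_3^n$ between them also serves as an $N$-free geodesic of $H$, where $N=(M\cap(V(H)\setminus X(H)))\cup\{p\}$. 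That assumption needs exactly the two facts you isolate and prove: (i) convexity of the copies of $ST_3^{n-1}$ in $ST_3^n$ (a path between vertices of $H$ that leaves $H$ must cross both shared corners of $H$, hence has length at least $2^n$, exceeding the diameter $2^{n-1}$ of $H$), which keeps the geodesic inside $H$ and away from $N\setminus\{p\}$ apart from its endpoints; and (ii) the corner lemma, that $p$, having exactly two mutually adjacent neighbours in $H$, can never be an internal vertex of a geodesic of $H$, since the triangle edge between those neighbours would shorten the path. Both facts are true, your justifications are sound, and together with your $a=p$ case they cover every pair demanded by the definition of an outer mutual-visibility set of $H$. In short: same route as the paper, but your version makes explicit (and correct) the step the paper leaves implicit, and is the more rigorous of the two.
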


\begin{proof}
Let $u \in M \cap V(H')$. Since $M$ is an outer mutual-visibility set of $ST_3^n$, 
there exists an $M$-free shortest $u,v$-path in $ST_3^n$ for every $v \in V(H)$.    
Moreover, since every shortest $u,v$-path in $ST_3^n$ contains $p$, 
there exists an $M$-free shortest $p,v$-path in $ST_3^n$ for every $v \in V(H)$.  It follows that $(M \cap (V(H)\setminus X(H)))  \cup  \{p\}$ is  an outer mutual-visibility set of $H$. 
\end{proof}

Note that if $M$  and $H$ satisfy the conditions of the above proposition, 
we have that $|(M \cap (V(H)\setminus X(H))| \le \mu_o(ST_3^{n-1}) - 1$. 

\begin{figure}[bt] 
\centering

\begin{tikzpicture}[scale=0.35]

\node[black, circle, draw] (p1) at (5/4*4,4*8.66/4)  {};
\node[black, fill=black,  circle, draw] (p2) at (5/4*3,3*8.66/4) [label=above:$v$] {};
\node[black, fill=black, circle, draw] (p3) at (5/4*5,3*8.66/4) {};
\node[black,  fill=black, circle, draw] (p4) at (5/4*2,2*8.66/4) [label=above:$u$] {};
\node[black, circle, draw] (p5) at (5/4*4,2*8.66/4)  {};
\node[black, circle, draw] (p6) at (5/4*6,2*8.66/4) {};
\node[black, circle, draw] (p7) at (5/4*1,1*8.66/4) 
[label=left:$w$] {};
\node[black,  circle, draw] (p8) at (5/4*3,1*8.66/4)  {};
\node[black, circle, draw] (p9) at (0,0)  {};
\node[black,  circle, draw] (p10) at (5/4*2,0)  {};
\node[black, circle, draw] (p11) at (5/4*5,1*8.66/4)  {};
\node[black, circle, draw] (p12) at (5/4*7,1*8.66/4)  {};
\node[black,  circle, draw] (p13) at (5/4*4,0)  {};
\node[black, fill=black, circle, draw] (p14) at (5/4*6,0)  {};
\node[black, circle, draw] (p15) at (5/4*8,0)  {};

\draw   (p1) -- (p2) -- (p3) -- (p1);
\draw   (p2) -- (p4) -- (p5) -- (p2);
\draw   (p3) -- (p5) -- (p6) -- (p3);
\draw   (p4) -- (p7) -- (p8) -- (p4);
\draw   (p7) -- (p9) -- (p10) -- (p7);
\draw   (p8) -- (p10) -- (p13) -- (p8);
\draw   (p6) -- (p11) -- (p12) -- (p6);
\draw   (p11) -- (p13) -- (p14) -- (p11);
\draw   (p12) -- (p14) -- (p15) -- (p12);

\node[black,   circle, draw] (q1) at (12+5/4*4,4*8.66/4)  {};
\node[black, fill=black, circle, draw] (q2) at (12+5/4*3,3*8.66/4) [label=above:$z_2$] {};
\node[black, fill=black,circle, draw] (q3) at (12+5/4*5,3*8.66/4) [label=above:$z_1$] {};
\node[black, circle, draw] (q4) at (12+5/4*2,2*8.66/4) [label=above:$w$] {};
\node[black, fill=black, circle, draw] (q5) at (12+5/4*4,2*8.66/4) [label=above:$u$] {};
\node[black, circle, draw] (q6) at (12+5/4*6,2*8.66/4)  {};
\node[black, circle, draw] (q7) at (12+5/4*1,1*8.66/4) [label=left:$H'$] {};
\node[black, fill=black, circle, draw] (q8) at (12+5/4*3,1*8.66/4) [label=above:$v$] {};
\node[black, circle, draw] (q9) at (12,0)  {};
\node[black, circle, draw] (q10) at (12+5/4*2,0)  {};
\node[black, circle, draw] (q11) at (12+5/4*5,1*8.66/4)  {};
\node[black, circle, draw] (q12) at (12+5/4*7,1*8.66/4) [label=right:$H"$] {};
\node[black, circle, draw] (q13) at (12+5/4*4,0)  {};
\node[black, circle, draw] (q14) at (12+5/4*6,0)  {};
\node[black, circle, draw] (q15) at (12+5/4*8,0)  {};

\draw   (q1) -- (q2) -- (q3) -- (q1);
\draw   (q2) -- (q4) -- (q5) -- (q2);
\draw   (q3) -- (q5) -- (q6) -- (q3);
\draw   (q4) -- (q7) -- (q8) -- (q4);
\draw   (q7) -- (q9) -- (q10) -- (q7);
\draw   (q8) -- (q10) -- (q13) -- (q8);
\draw   (q6) -- (q11) -- (q12) -- (q6);
\draw   (q11) -- (q13) -- (q14) -- (q11);
\draw   (q12) -- (q14) -- (q15) -- (q12);

\end{tikzpicture}

\caption{Cases in the proof of Proposition \ref{zgornjaO}  } \label{cases}
\end{figure}
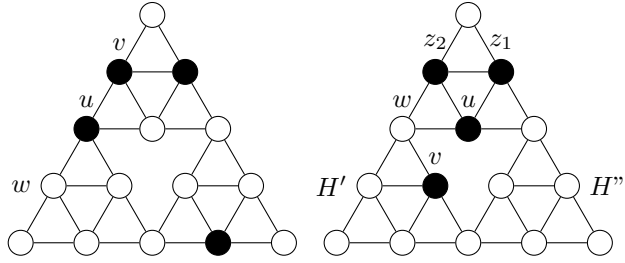

\begin{prop} \label{zgornjaO}
If $n \ge 1$, then 
\begin{displaymath}
\mu_o(ST_3^n) \le
        \left \{ \begin{array}{llll}
              3,  &  n = 1   \\
              3^{n-2}+3,  &  {\rm otherwise}   \\
             \end{array}. \right.
\end{displaymath}
\end{prop}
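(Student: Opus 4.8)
The plan is to follow the same architecture as the proof of Proposition~\ref{zgornjaM}, replacing Proposition~\ref{edenM} by its outer analogue \ref{edenO} and exploiting the extra requirement (that every vertex of $M$ be $M$-visible from every vertex of $\overline M$) to sharpen the count from $3^{n-1}+3$ down to $3^{n-2}+3$.

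First I would dispose of the small cases. For $n=1$ the bound $\mu_o(ST_3^1)\le 3$ follows by inspecting the size-$4$ mutual-visibility sets of $ST_3^1$ (Fig.~\ref{four}) and observing that in each of them some vertex of $\overline M$ fails to be $M$-visible from an $M$-vertex. For $n=2$ I would assume $|M|\ge 5$ and reach a contradiction through the configurations recorded in Fig.~\ref{cases}: after fixing how the vertices of $M$ are distributed among the three $ST_3^1$-subcopies, one locates a vertex $w\in\overline M$ together with an $M$-vertex all of whose shortest paths to $w$ are forced through a third vertex of $M$. The values $n=3,4,5$ I would settle by the ILP/SAT computation of Subsection~2.3; these are genuinely needed because, as the arithmetic below shows, the inductive cases only close for $n\ge 6$.

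For the induction ($n\ge 6$) I would split, exactly as in Proposition~\ref{zgornjaM}, into the three cases A, B, C according to whether the two $ST_3^{n-2}$-copies adjacent to a fixed $ST_3^{n-1}$-copy $H$ carry interior vertices of $M$. Cases A and B are purely arithmetic: deleting the $M$-free subcopy leaves a union of one $ST_3^{n-1}$ and a few $ST_3^{n-2}$'s, and substituting the inductive estimates $\mu_o(ST_3^{n-1})\le 3^{n-3}+3$ and $\mu_o(ST_3^{n-2})\le 3^{n-4}+3$ (together with the $-1$ correction from the remark following Proposition~\ref{edenO}) gives $|M|\le 3^{n-2}+3$, with the inequality strict once $n\ge 6$.

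The hard part will be case C, where every $ST_3^{n-1}$-subcopy $H_i$ has two adjacent $ST_3^{n-2}$-copies meeting $M$ in their interiors, and a naive summation overshoots $3^{n-2}+3$ by a constant. The decisive observation I would use is that the outer condition rigidly controls the three \emph{midpoint} vertices, i.e.\ the vertices $m_{ij}$ shared by two subcopies $H_i,H_j$: if $m_{ij}\in M$, then since every shortest path between $V(H_i)\setminus\{m_{ij}\}$ and $V(H_j)\setminus\{m_{ij}\}$ runs through $m_{ij}$ (by Proposition~\ref{razdalje}), no $M$-vertex of $H_i\setminus\{m_{ij}\}$ can be $M$-visible from any $\overline M$-vertex of $H_j\setminus\{m_{ij}\}$; as $H_j$ cannot be entirely contained in $M$, this forces $H_i$ to carry no interior $M$-vertex, contradicting case C. Hence in case C no midpoint lies in $M$. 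Then the restriction of $M$ to each $H_i$, augmented by the three extreme vertices of $H_i$, is an outer mutual-visibility set of $H_i$, so the inductive bound leaves at most $3^{n-3}$ interior $M$-vertices in each subcopy, whence $|M|\le 3\cdot 3^{n-3}+3=3^{n-2}+3$, the three additive units being exactly the corners of $ST_3^n$. The delicate step to verify carefully is precisely this augmentation — that adjoining the (possibly non-$M$) extreme vertices of $H_i$ preserves outer visibility inside $H_i$ — which is where the convexity of the subcopies expressed by Proposition~\ref{razdalje2} must be invoked.
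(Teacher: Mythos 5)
Your overall architecture coincides with the paper's: the same treatment of $n=1,2$, the same appeal to computation for $n=3,4,5$ (and you correctly observe that the inductive arithmetic only closes for $n\ge 6$), and cases A and B are exactly the paper's. The divergence is in case C, and there your argument has two genuine gaps. First, the premise of your midpoint-exclusion step is false: it is not true that every shortest path between $V(H_i)\setminus\{m_{ij}\}$ and $V(H_j)\setminus\{m_{ij}\}$ runs through $m_{ij}$. Take $u=m_{ik}$ and $v=m_{jk}$: by Proposition \ref{razdalje} the route through $m_{ij}$ has length $2^{n-1}+2^{n-1}=2^n$, while the path inside $H_k$ has length $2^{n-1}$, and the same is true for all pairs close to these two corners. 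Consequently the sentence ``no $M$-vertex of $H_i\setminus\{m_{ij}\}$ can be $M$-visible from any $\overline M$-vertex of $H_j\setminus\{m_{ij}\}$'' is unjustified, and the observation that $H_j$ is not entirely contained in $M$ produces no contradiction, because the $\overline M$-vertex it supplies may be reachable around $m_{ij}$. The conclusion itself is salvageable, but only by placing the witness next to the midpoint: if $m_{ij}\in M$ and $u\in M$ is interior to $H_i$, choose $v$ to be a neighbour of $m_{ij}$ inside $H_j$; then every shortest $u,v$-path has length at most $2^{n-1}+1$ and passes through $m_{ij}$, whereas any path avoiding $m_{ij}$ has length at least $2^n$, so $u$ and $v$ are not $M$-visible whether or not $v\in M$, contradicting (outer) mutual visibility.

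Second, and more seriously, the step you yourself flag as delicate --- that $(M\cap V(H_i))\cup X(H_i)$ is an outer mutual-visibility set of $H_i$ --- is asserted but never proved, and in this three-corner form it does not follow from the hypothesis: if the extreme vertex $x_i$ of $ST_3^n$ lying in $H_i$ is not in $M$, then the pairs $(x_i,v)$ with $v\in V(H_i)\setminus M$, which the augmented set must render visible, are pairs of $\overline M$-vertices, and the outer mutual-visibility of $M$ in $ST_3^n$ imposes no condition whatsoever on such pairs. Proposition \ref{razdalje2} does not rescue this; it only locates shared corners on intervals emanating from the adjacent subcopies. What can be proved (and is all you need once midpoints are excluded) is the two-corner augmentation: in case C the adjacent copies of $ST_3^{n-2}$ at $m_{ij}$ and $m_{ik}$ contain interior $M$-vertices, and running the argument of Proposition \ref{edenO} for each of them --- using Proposition \ref{razdalje2} to see that the relevant $M$-free shortest paths cross exactly one corner of $H_i$ --- shows that $(M\cap V(H_i))\cup\{m_{ij},m_{ik}\}$ is an outer mutual-visibility set of $H_i$. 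This yields $|M\cap(V(H_i)\setminus\{m_{ij},m_{ik}\})|\le 3^{n-3}+1$ per copy, and with midpoints excluded, $|M|\le 3\cdot(3^{n-3}+1)=3^{n-2}+3$, which is exactly the count the paper extracts in its case C. As written, however, your case C is not a complete proof.
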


\begin{proof}
Let $M$ be an outer mutual-visibility set of $ST_3^n$. 
To confirm the proposition for $ST_3^1$, suppose that $M$ is of cardinality 4. Obviously, every outer mutual-visibility set is also a 
mutual-visibility set. Let observe all mutual-visibility sets with four 
vertices of a copy of $ST_3^1$, say $H$, in Fig. \ref{four}. It is not difficult to  find $u \in M$ and $v \in V(H)\setminus M$ such that the 
shortest $u,v$-path  contains a vertex of $M$. Since we obtains a contradiction, we have $\mu_o(ST_3^1) \le 3$.

Let $M$ be an outer 
mutual-visibility set of $ST_3^2$. Suppose to the contrary that $M$ 
possesses 5 vertices. Note first that $M$ cannot contain 4 non-proper vertices, since otherwise $M$ contains 2 vertices, say $u$ and $v$,  such that $u,v \in I(p_i,p_j)$ and $u \not \in \{p_i, p_j\}$, 
where $p_i$ and $p_j$ are extreme vertices of $ST_3^2$. 
Consequently,  there exists a vertex $w \in I(p_i,p_j)$ such that 
$u$ belongs to the shortest $v,w$-path (see the left hand side of Fig. \ref{cases}). 

Next we show that 
$M$ cannot contain 2 proper vertices. Suppose to the contrary that $u$ and $v$ are proper vertices and $u,v \in M$. Let $H$, $H'$ and $H"$ be distinct copies of $ST_3^1$ in $ST_3^2$ and let $u\in V(H)$ and $v\in V(H')$. 
Let $w$ be the vertex adjacent to both $u$ and $w$. 
For every vertex $z \in V(H")$, any
shortest $w,z$-path would necessarily pass through $u$ or $v$ (see the right side of Fig. \ref{cases}).
Consequently, none of the vertices in $V(H")$ can belong to $M$, implying that $M$ must include 3 vertices from $V(H)$ or $V(H')$.

Assume that \( M \) includes 3 vertices \( u, z_1, z_2 \) from \( V(H) \). If \( z_1 \) is an extreme vertex of \( ST_3^2 \), there exists a vertex \( w \) adjacent to \( z_2 \) such that \( z_2 \) lies on the shortest \( z_1, w \)-path. This implies that neither \( z_1 \) nor \( z_2 \) can be an extreme vertex of \( ST_3^2 \), as their inclusion would disrupt mutual-visibility. Additionally, if \( u, z_1, z_2 \) form a triangle, \( u \) and the extreme vertex adjacent to both \( z_1 \) and \( z_2 \) would not be mutually visible within \( M \).

Thus, we conclude that \( M \) cannot contain 5 vertices, confirming that the outer mutual-visibility set of \( ST_3^2 \) must have a cardinality less than 5. This assertion settles the case.

For $n=3,4,5$ the claim regarding the the cardinality of outer mutual-visibility sets  is confirmed by a computer.
Let $n \ge 6$. 
The remainder of the proof proceeds by induction on \( n \), similar to the approach used in the proof of Proposition \ref{zgornjaM}.
Let the claim holds for dimensions up to  $n-1$ and
let $M$ be an outer mutual-visibility set of $ST_3^n$. 

Suppose first that $ST_3^{n}$ admits  a copy of $ST_3^{n-1}$, say $H$,
such that for two adjacent copies of $ST_3^{n-2}$, say $H'$ and $H''$, neither $V(H')\setminus X(H')$ nor $V(H")\setminus X(H")$  
contains a vertex in $M$. Note that the subgraph of $ST_3^n$
induced by $V(ST_3^n)\setminus (V(H')\setminus X(H') \cup V(H")\setminus X(H")$  
contains one copy of $ST_3^{n-1}$ and four copies of $ST_3^{n-2}$. 
By the inductive hypothesis we have that $|M| \le 3^{n-3}+3 + 
4 \cdot (3^{n-4}+3) = 2 \cdot  3^{n-3} + 3^{n-4}  + 15 < 3^{n-2} + 3$.  

Suppose next that $ST_3^{n}$ admits  a copy of $ST_3^{n-1}$, say $H$,
and of the two adjacent copies of $ST_3^{n-2}$, say $H'$ and $H''$, only
$V(H")\setminus X(H")$ contains a vertex in $M$. 
Given 
$\mu_o(ST_3^{n-1}) \le 3^{n-3}+3$ and noting  $V(H")\setminus X(H")$ contains a vertex in $M$, by Proposition \ref{edenO} we have that 
$|M \cap V(H)\setminus X(H)| \le 3^{n-3}+2$. 

Note also that the subgraph of $ST_3^n$
induced by $V(ST_3^n)\setminus (V(H')\setminus X(H'))$  
contains $H$, a copy of $ST_3^{n-1}$  adjacent to $H$ and two copies of $ST_3^{n-4}$. 
By the inductive hypothesis and the above discussion we have that 
$|M| \le 3^{n-3}+2 3^{n-3}+ 3  + 2 \cdot (3^{n-4}+3) < 3^{n-2} + 3$.  

Finally, suppose that every copy of $ST_3^{n-1}$ in $ST_3^n$ 
admits two adjacent copies of $ST_3^{n-2}$ that both contain a vertex in 
$M$.
By Proposition \ref{edenO} and the inductive hypothesis we the have  
$|M| \le 3 \cdot (3^{n-3}+1) =  3^{n-2} + 3$. 

By considering all cases — whether \( ST_3^n \) contains copies of \( ST_3^{n-1} \) with one, none, or both adjacent \( ST_3^{n-2} \) copies containing vertices from \( M \) — we have shown that the outer mutual-visibility set cannot exceed \( 3^{n-2} + 3 \) in size. 
\end{proof}

We focus again on the graph \(ST_3^2\). Recall from Subsection 2.2 that every copy of \(ST_3^2\) in \(ST_3^n\) contains three proper vertices. For the set \(X(ST_3^2) = \{p_0, p_1, p_2\}\), these proper vertices form the set \(V(ST_3^2) \setminus (I(p_0, p_1) \cup I(p_0, p_2) \cup I(p_1, p_2))\).

Let $M \subseteq V(ST_3^n)$, $n\ge 2$ and $H_2$ a copy of $ST_3^2$. 
We say that $M$ is \textit{$H_2$-outer-proper}  if for every $u\in V(H_2)$ and every $p_i \in X(H_2)$, $i \in [3]_0$, 
it holds  that $u$ and $p_i$ are $M$-visible.   

\begin{prop} \label{properO}
Let $M \subseteq ST_3^n$, $n\ge 2$ and  $H$ be a copy of $ST_3^2$ in 
$ST_3^n$. Then $M$ is $H_2$-outer-proper if and only if  
$M \cap V(H_2) = \emptyset$ or $M \cap V(H_2) = \{u\}$, where $u$ is a proper 
vertex of $ST_3^2$.
\end{prop}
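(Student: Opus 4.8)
The plan is to reduce the statement to a single copy of $ST_3^2$ and then settle it by analysing the geodesics of that $15$-vertex graph. The first step is to note that for $u\in V(H_2)$ and $p_i\in X(H_2)$ every shortest $u,p_i$-path lies entirely in $V(H_2)$: since $p_i$ is a boundary (extreme) vertex of $H_2$, a geodesic ending at $p_i$ gains nothing by leaving $H_2$ and returning through another extreme vertex of $H_2$. Hence $M$-visibility of $u$ and $p_i$ depends only on $M\cap V(H_2)$, and being $H_2$-outer-proper becomes a finite condition on $ST_3^2$. I would also use the automorphism group of $ST_3^2$, which acts transitively both on the three proper vertices and on the three extreme vertices, to reduce the configurations to be examined to a single representative in each case.

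For the direction ``$\Leftarrow$'', the case $M\cap V(H_2)=\emptyset$ is immediate. Suppose $M\cap V(H_2)=\{w\}$ with $w$ proper; $w$ is the inner midpoint of one of the three $ST_3^1$-subcopies of $H_2$. The key observation is that the \emph{only} pair of vertices whose unique geodesic passes through $w$ is the pair consisting of the two non-extreme corners of that subcopy (which lie at distance two, realised solely via $w$). Since neither of these corners is extreme, any geodesic from an arbitrary $u$ to an extreme vertex that happens to traverse $w$ can be rerouted through one of the other two subcopies without changing its length, producing a $\{w\}$-free shortest path. Thus $u$ and every $p_i$ are $M$-visible and $M$ is $H_2$-outer-proper.

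For the direction ``$\Rightarrow$'' I would argue by contraposition, based on one structural fact about $ST_3^2$ that is verified by inspection: each of the three geodesics joining a pair of extreme vertices is unique, and the interiors of these three geodesics are pairwise disjoint and together exhaust the nine non-extreme, non-proper vertices. Consequently, if $M$ contains a non-extreme, non-proper vertex $v$, then $v$ is interior to the unique geodesic between some pair $p_i,p_j$ of extreme vertices, so $p_i$ and $p_j$ are not $M$-visible and $M$ fails to be $H_2$-outer-proper. If instead $M$ contains two proper vertices, they belong to two distinct subcopies which share a common corner $c$; both shortest paths from $c$ to the extreme vertex of the third subcopy pass through one of these two proper vertices, so $c$ and that extreme vertex are not $M$-visible (and three proper vertices fail a fortiori). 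Together these two observations force $M\cap V(H_2)$ to be empty or a single proper vertex.

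The step I expect to be the main obstacle is the correct treatment of the extreme (corner) vertices of $H_2$. Inside $H_2$ such a vertex has degree two and therefore never lies in the interior of an intra-$H_2$ geodesic, so at first sight it blocks no pair; making the characterisation come out exactly as stated at the boundary is the delicate point. I would resolve it using the role these shared corners play in the ambient graph, in the same spirit as Proposition~\ref{edenO} and the global counting in Proposition~\ref{zgornjaO}, supplemented by the computer verification already invoked there in small dimensions. Once the two structural observations above are established, the remainder is a routine, symmetry-reduced geodesic check in a single $ST_3^2$.
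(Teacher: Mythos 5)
Most of your argument tracks the paper's own proof quite closely: the paper likewise dismisses the empty and single-proper-vertex cases by inspection, rules out non-proper vertices of $M$ because they lie on the (unique) geodesic joining two extreme vertices, and rules out two proper vertices by exactly your common-corner argument (its vertex $x$ is your $c$, and both geodesics from $x$ to the third extreme vertex are blocked). One caveat on your easy direction: the ``key observation'' is overstated. Writing $p_0,a,b$ for the corners of the subcopy containing the proper vertex $w$ (with $p_0$ extreme) and $y$ for the common neighbour of $a$ and $p_1$, the pair $(y,b)$ also has a unique geodesic, namely $y,a,w,b$, so $(a,b)$ is \emph{not} the only pair blocked by $w$. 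The statement you actually need --- that no blocked pair involves a vertex of $X(H_2)$ --- is true and is what the inspection should verify.

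The genuine gap is the one you flag yourself and then defer: vertices of $X(H_2)$ lying in $M$. Your proposed escape (the corners' ``role in the ambient graph,'' Propositions \ref{edenO} and \ref{zgornjaO}, computer verification) cannot close it, because your own first reduction rules it out: all geodesics between a vertex of $H_2$ and a vertex of $X(H_2)$ stay inside $H_2$, and inside $H_2$ an extreme vertex has degree two with adjacent neighbours, hence is never interior to any such geodesic. Consequently a set $M$ with $M\cap V(H_2)=\{p_k\}$, $p_k\in X(H_2)$, satisfies the stated definition of $H_2$-outer-proper while violating the conclusion, so the ``only if'' direction cannot be proved as written --- no ambient-graph argument can change this, since the relevant geodesics never leave $H_2$. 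Note that the paper's own proof slides over the same point: it asserts that a non-proper $u\in M$ ``belongs to the shortest $p_i,p_j$-path and we obtain a contradiction,'' but an extreme vertex lies on such a geodesic only as an endpoint, and $M$-free paths are permitted to contain their endpoints, so no contradiction follows. The case is genuinely settled only if one reads the definition of $H_2$-outer-proper with the additional interval condition $I(p_i,p_j)\cap M=\emptyset$ used in the definition of $H_2$-proper before Proposition \ref{proper}, which outlaws extreme vertices of $H_2$ in $M$ outright; under the definition as literally written, both your attempt and the paper's proof have a hole at exactly this case, and yours at least makes the failure visible.
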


\begin{proof}
If $M \cap V(H_2) = \emptyset$, the claim is trivial. 
Suppose then that $M \cap V(H_2) \not = \emptyset$. 

If $M \cap V(H_2) = \{u\}$ and $u$ is a proper vertex of $ST_3^2$, it is 
not difficult to check that 
for every $v\in V(H_2)$ and every $i \in [3]_0$ it holds that $v$ and 
$p_i \in X(H_2)$ are $M$-visible.   

Let us assume that $M$ is $H$-outer-proper, yet the conditions of the proposition are not fulfilled. 
If $u \in M \cap V(H_2) $ and $u$ is not a proper vertex of $ST_3^2$, then, 
by the definition of a proper vertex, there exist $i,j \in [3]_0$ such that $u$ belongs to the shortest $p_i,p_j$-path and we obtain a contradiction.  
If $\{u,v\} \subseteq M \cap V(H_2)$, then $u$ and $v$ must be both 
proper by the above discussion. We may assume without loss of generality that 
$d(u,p_0)=d(v,p_1)=2$. Let $x$ be the vertex adjacent to both $u$ and $v$.
Since it is not difficult to see that neither of shortest 
 $x, p_2$-paths is $M$-free (one contains $u$ and the other $b$), we again obtain a contradiction. This assertion concludes the proof. 
\end{proof}

\begin{figure}[bt] 
\centering

\begin{picture} (120, 120)

\put(0,0){\circle{1}}\put(7.5,0){\circle{1}}\put(15,0){\circle{1}}\put(3.75,6.49519){\circle{1}}\put(11.25,6.49519){\circle{1}}\put(7.5,12.9904){\circle{1}}\put(0.5,0){\line(1,0){6.5}}\put(8,0){\line(1,0){6.5}}\put(4.25,6.49519){\line(1,0){6.5}}\qbezier(0.25,0.45)(0.25,0.45)(3.5,6.04519)\qbezier(7.25,0.45)(7.25,0.45)(4,6.04519)\qbezier(7.75,0.45)(7.75,0.45)(11,6.04519)\qbezier(14.75,0.45)(14.75,0.45)(11.5,6.04519)\qbezier(4,6.94519)(4,6.94519)(7.25,12.5404)\qbezier(11,6.94519)(11,6.94519)(7.75,12.5404)
\put(15,0){\circle{1}}\put(22.5,0){\circle{1}}\put(30,0){\circle{1}}\put(18.75,6.49519){\circle{1}}\put(26.25,6.49519){\circle{1}}\put(22.5,12.9904){\circle{1}}\put(15.5,0){\line(1,0){6.5}}\put(23,0){\line(1,0){6.5}}\put(19.25,6.49519){\line(1,0){6.5}}\qbezier(15.25,0.45)(15.25,0.45)(18.5,6.04519)\qbezier(22.25,0.45)(22.25,0.45)(19,6.04519)\qbezier(22.75,0.45)(22.75,0.45)(26,6.04519)\qbezier(29.75,0.45)(29.75,0.45)(26.5,6.04519)\qbezier(19,6.94519)(19,6.94519)(22.25,12.5404)\qbezier(26,6.94519)(26,6.94519)(22.75,12.5404)
\put(7.5,12.9904){\circle{1}}\put(15,12.9904){\circle{1}}\put(22.5,12.9904){\circle{1}}\put(11.25,19.4856){\circle{1}}\put(18.75,19.4856){\circle{1}}\put(15,25.9808){\circle{1}}\put(8,12.9904){\line(1,0){6.5}}\put(15.5,12.9904){\line(1,0){6.5}}\put(11.75,19.4856){\line(1,0){6.5}}\qbezier(7.75,13.4404)(7.75,13.4404)(11,19.0356)\qbezier(14.75,13.4404)(14.75,13.4404)(11.5,19.0356)\qbezier(15.25,13.4404)(15.25,13.4404)(18.5,19.0356)\qbezier(22.25,13.4404)(22.25,13.4404)(19,19.0356)\qbezier(11.5,19.9356)(11.5,19.9356)(14.75,25.5308)\qbezier(18.5,19.9356)(18.5,19.9356)(15.25,25.5308)
\put(30,0){\circle{1}}\put(37.5,0){\circle{1}}\put(45,0){\circle{1}}\put(33.75,6.49519){\circle{1}}\put(41.25,6.49519){\circle{1}}\put(37.5,12.9904){\circle{1}}\put(30.5,0){\line(1,0){6.5}}\put(38,0){\line(1,0){6.5}}\put(34.25,6.49519){\line(1,0){6.5}}\qbezier(30.25,0.45)(30.25,0.45)(33.5,6.04519)\qbezier(37.25,0.45)(37.25,0.45)(34,6.04519)\qbezier(37.75,0.45)(37.75,0.45)(41,6.04519)\qbezier(44.75,0.45)(44.75,0.45)(41.5,6.04519)\qbezier(34,6.94519)(34,6.94519)(37.25,12.5404)\qbezier(41,6.94519)(41,6.94519)(37.75,12.5404)
\put(45,0){\circle{1}}\put(52.5,0){\circle{1}}\put(60,0){\circle{1}}\put(48.75,6.49519){\circle{1}}\put(56.25,6.49519){\circle{1}}\put(52.5,12.9904){\circle{1}}\put(45.5,0){\line(1,0){6.5}}\put(53,0){\line(1,0){6.5}}\put(49.25,6.49519){\line(1,0){6.5}}\qbezier(45.25,0.45)(45.25,0.45)(48.5,6.04519)\qbezier(52.25,0.45)(52.25,0.45)(49,6.04519)\qbezier(52.75,0.45)(52.75,0.45)(56,6.04519)\qbezier(59.75,0.45)(59.75,0.45)(56.5,6.04519)\qbezier(49,6.94519)(49,6.94519)(52.25,12.5404)\qbezier(56,6.94519)(56,6.94519)(52.75,12.5404)
\put(37.5,12.9904){\circle{1}}\put(45,12.9904){\circle{1}}\put(52.5,12.9904){\circle{1}}\put(41.25,19.4856){\circle{1}}\put(48.75,19.4856){\circle{1}}\put(45,25.9808){\circle{1}}\put(38,12.9904){\line(1,0){6.5}}\put(45.5,12.9904){\line(1,0){6.5}}\put(41.75,19.4856){\line(1,0){6.5}}\qbezier(37.75,13.4404)(37.75,13.4404)(41,19.0356)\qbezier(44.75,13.4404)(44.75,13.4404)(41.5,19.0356)\qbezier(45.25,13.4404)(45.25,13.4404)(48.5,19.0356)\qbezier(52.25,13.4404)(52.25,13.4404)(49,19.0356)\qbezier(41.5,19.9356)(41.5,19.9356)(44.75,25.5308)\qbezier(48.5,19.9356)(48.5,19.9356)(45.25,25.5308)
\put(15,25.9808){\circle{1}}\put(22.5,25.9808){\circle{1}}\put(30,25.9808){\circle{1}}\put(18.75,32.476){\circle{1}}\put(26.25,32.476){\circle{1}}\put(22.5,38.9711){\circle{1}}\put(15.5,25.9808){\line(1,0){6.5}}\put(23,25.9808){\line(1,0){6.5}}\put(19.25,32.476){\line(1,0){6.5}}\qbezier(15.25,26.4308)(15.25,26.4308)(18.5,32.026)\qbezier(22.25,26.4308)(22.25,26.4308)(19,32.026)\qbezier(22.75,26.4308)(22.75,26.4308)(26,32.026)\qbezier(29.75,26.4308)(29.75,26.4308)(26.5,32.026)\qbezier(19,32.926)(19,32.926)(22.25,38.5211)\qbezier(26,32.926)(26,32.926)(22.75,38.5211)
\put(30,25.9808){\circle{1}}\put(37.5,25.9808){\circle{1}}\put(45,25.9808){\circle{1}}\put(33.75,32.476){\circle{1}}\put(41.25,32.476){\circle{1}}\put(37.5,38.9711){\circle{1}}\put(30.5,25.9808){\line(1,0){6.5}}\put(38,25.9808){\line(1,0){6.5}}\put(34.25,32.476){\line(1,0){6.5}}\qbezier(30.25,26.4308)(30.25,26.4308)(33.5,32.026)\qbezier(37.25,26.4308)(37.25,26.4308)(34,32.026)\qbezier(37.75,26.4308)(37.75,26.4308)(41,32.026)\qbezier(44.75,26.4308)(44.75,26.4308)(41.5,32.026)\qbezier(34,32.926)(34,32.926)(37.25,38.5211)\qbezier(41,32.926)(41,32.926)(37.75,38.5211)
\put(22.5,38.9711){\circle{1}}\put(30,38.9711){\circle{1}}\put(37.5,38.9711){\circle{1}}\put(26.25,45.4663){\circle{1}}\put(33.75,45.4663){\circle{1}}\put(30,51.9615){\circle{1}}\put(23,38.9711){\line(1,0){6.5}}\put(30.5,38.9711){\line(1,0){6.5}}\put(26.75,45.4663){\line(1,0){6.5}}\qbezier(22.75,39.4211)(22.75,39.4211)(26,45.0163)\qbezier(29.75,39.4211)(29.75,39.4211)(26.5,45.0163)\qbezier(30.25,39.4211)(30.25,39.4211)(33.5,45.0163)\qbezier(37.25,39.4211)(37.25,39.4211)(34,45.0163)\qbezier(26.5,45.9163)(26.5,45.9163)(29.75,51.5115)\qbezier(33.5,45.9163)(33.5,45.9163)(30.25,51.5115)
\put(60,0){\circle{1}}\put(67.5,0){\circle{1}}\put(75,0){\circle{1}}\put(63.75,6.49519){\circle{1}}\put(71.25,6.49519){\circle{1}}\put(67.5,12.9904){\circle{1}}\put(60.5,0){\line(1,0){6.5}}\put(68,0){\line(1,0){6.5}}\put(64.25,6.49519){\line(1,0){6.5}}\qbezier(60.25,0.45)(60.25,0.45)(63.5,6.04519)\qbezier(67.25,0.45)(67.25,0.45)(64,6.04519)\qbezier(67.75,0.45)(67.75,0.45)(71,6.04519)\qbezier(74.75,0.45)(74.75,0.45)(71.5,6.04519)\qbezier(64,6.94519)(64,6.94519)(67.25,12.5404)\qbezier(71,6.94519)(71,6.94519)(67.75,12.5404)
\put(75,0){\circle{1}}\put(82.5,0){\circle{1}}\put(90,0){\circle{1}}\put(78.75,6.49519){\circle{1}}\put(86.25,6.49519){\circle{1}}\put(82.5,12.9904){\circle{1}}\put(75.5,0){\line(1,0){6.5}}\put(83,0){\line(1,0){6.5}}\put(79.25,6.49519){\line(1,0){6.5}}\qbezier(75.25,0.45)(75.25,0.45)(78.5,6.04519)\qbezier(82.25,0.45)(82.25,0.45)(79,6.04519)\qbezier(82.75,0.45)(82.75,0.45)(86,6.04519)\qbezier(89.75,0.45)(89.75,0.45)(86.5,6.04519)\qbezier(79,6.94519)(79,6.94519)(82.25,12.5404)\qbezier(86,6.94519)(86,6.94519)(82.75,12.5404)
\put(67.5,12.9904){\circle{1}}\put(75,12.9904){\circle{1}}\put(82.5,12.9904){\circle{1}}\put(71.25,19.4856){\circle{1}}\put(78.75,19.4856){\circle{1}}\put(75,25.9808){\circle{1}}\put(68,12.9904){\line(1,0){6.5}}\put(75.5,12.9904){\line(1,0){6.5}}\put(71.75,19.4856){\line(1,0){6.5}}\qbezier(67.75,13.4404)(67.75,13.4404)(71,19.0356)\qbezier(74.75,13.4404)(74.75,13.4404)(71.5,19.0356)\qbezier(75.25,13.4404)(75.25,13.4404)(78.5,19.0356)\qbezier(82.25,13.4404)(82.25,13.4404)(79,19.0356)\qbezier(71.5,19.9356)(71.5,19.9356)(74.75,25.5308)\qbezier(78.5,19.9356)(78.5,19.9356)(75.25,25.5308)
\put(90,0){\circle{1}}\put(97.5,0){\circle{1}}\put(105,0){\circle{1}}\put(93.75,6.49519){\circle{1}}\put(101.25,6.49519){\circle{1}}\put(97.5,12.9904){\circle{1}}\put(90.5,0){\line(1,0){6.5}}\put(98,0){\line(1,0){6.5}}\put(94.25,6.49519){\line(1,0){6.5}}\qbezier(90.25,0.45)(90.25,0.45)(93.5,6.04519)\qbezier(97.25,0.45)(97.25,0.45)(94,6.04519)\qbezier(97.75,0.45)(97.75,0.45)(101,6.04519)\qbezier(104.75,0.45)(104.75,0.45)(101.5,6.04519)\qbezier(94,6.94519)(94,6.94519)(97.25,12.5404)\qbezier(101,6.94519)(101,6.94519)(97.75,12.5404)
\put(105,0){\circle{1}}\put(112.5,0){\circle{1}}\put(120,0){\circle{1}}\put(108.75,6.49519){\circle{1}}\put(116.25,6.49519){\circle{1}}\put(112.5,12.9904){\circle{1}}\put(105.5,0){\line(1,0){6.5}}\put(113,0){\line(1,0){6.5}}\put(109.25,6.49519){\line(1,0){6.5}}\qbezier(105.25,0.45)(105.25,0.45)(108.5,6.04519)\qbezier(112.25,0.45)(112.25,0.45)(109,6.04519)\qbezier(112.75,0.45)(112.75,0.45)(116,6.04519)\qbezier(119.75,0.45)(119.75,0.45)(116.5,6.04519)\qbezier(109,6.94519)(109,6.94519)(112.25,12.5404)\qbezier(116,6.94519)(116,6.94519)(112.75,12.5404)
\put(97.5,12.9904){\circle{1}}\put(105,12.9904){\circle{1}}\put(112.5,12.9904){\circle{1}}\put(101.25,19.4856){\circle{1}}\put(108.75,19.4856){\circle{1}}\put(105,25.9808){\circle{1}}\put(98,12.9904){\line(1,0){6.5}}\put(105.5,12.9904){\line(1,0){6.5}}\put(101.75,19.4856){\line(1,0){6.5}}\qbezier(97.75,13.4404)(97.75,13.4404)(101,19.0356)\qbezier(104.75,13.4404)(104.75,13.4404)(101.5,19.0356)\qbezier(105.25,13.4404)(105.25,13.4404)(108.5,19.0356)\qbezier(112.25,13.4404)(112.25,13.4404)(109,19.0356)\qbezier(101.5,19.9356)(101.5,19.9356)(104.75,25.5308)\qbezier(108.5,19.9356)(108.5,19.9356)(105.25,25.5308)
\put(75,25.9808){\circle{1}}\put(82.5,25.9808){\circle{1}}\put(90,25.9808){\circle{1}}\put(78.75,32.476){\circle{1}}\put(86.25,32.476){\circle{1}}\put(82.5,38.9711){\circle{1}}\put(75.5,25.9808){\line(1,0){6.5}}\put(83,25.9808){\line(1,0){6.5}}\put(79.25,32.476){\line(1,0){6.5}}\qbezier(75.25,26.4308)(75.25,26.4308)(78.5,32.026)\qbezier(82.25,26.4308)(82.25,26.4308)(79,32.026)\qbezier(82.75,26.4308)(82.75,26.4308)(86,32.026)\qbezier(89.75,26.4308)(89.75,26.4308)(86.5,32.026)\qbezier(79,32.926)(79,32.926)(82.25,38.5211)\qbezier(86,32.926)(86,32.926)(82.75,38.5211)
\put(90,25.9808){\circle{1}}\put(97.5,25.9808){\circle{1}}\put(105,25.9808){\circle{1}}\put(93.75,32.476){\circle{1}}\put(101.25,32.476){\circle{1}}\put(97.5,38.9711){\circle{1}}\put(90.5,25.9808){\line(1,0){6.5}}\put(98,25.9808){\line(1,0){6.5}}\put(94.25,32.476){\line(1,0){6.5}}\qbezier(90.25,26.4308)(90.25,26.4308)(93.5,32.026)\qbezier(97.25,26.4308)(97.25,26.4308)(94,32.026)\qbezier(97.75,26.4308)(97.75,26.4308)(101,32.026)\qbezier(104.75,26.4308)(104.75,26.4308)(101.5,32.026)\qbezier(94,32.926)(94,32.926)(97.25,38.5211)\qbezier(101,32.926)(101,32.926)(97.75,38.5211)
\put(82.5,38.9711){\circle{1}}\put(90,38.9711){\circle{1}}\put(97.5,38.9711){\circle{1}}\put(86.25,45.4663){\circle{1}}\put(93.75,45.4663){\circle{1}}\put(90,51.9615){\circle{1}}\put(83,38.9711){\line(1,0){6.5}}\put(90.5,38.9711){\line(1,0){6.5}}\put(86.75,45.4663){\line(1,0){6.5}}\qbezier(82.75,39.4211)(82.75,39.4211)(86,45.0163)\qbezier(89.75,39.4211)(89.75,39.4211)(86.5,45.0163)\qbezier(90.25,39.4211)(90.25,39.4211)(93.5,45.0163)\qbezier(97.25,39.4211)(97.25,39.4211)(94,45.0163)\qbezier(86.5,45.9163)(86.5,45.9163)(89.75,51.5115)\qbezier(93.5,45.9163)(93.5,45.9163)(90.25,51.5115)
\put(30,51.9615){\circle{1}}\put(37.5,51.9615){\circle{1}}\put(45,51.9615){\circle{1}}\put(33.75,58.4567){\circle{1}}\put(41.25,58.4567){\circle{1}}\put(37.5,64.9519){\circle{1}}\put(30.5,51.9615){\line(1,0){6.5}}\put(38,51.9615){\line(1,0){6.5}}\put(34.25,58.4567){\line(1,0){6.5}}\qbezier(30.25,52.4115)(30.25,52.4115)(33.5,58.0067)\qbezier(37.25,52.4115)(37.25,52.4115)(34,58.0067)\qbezier(37.75,52.4115)(37.75,52.4115)(41,58.0067)\qbezier(44.75,52.4115)(44.75,52.4115)(41.5,58.0067)\qbezier(34,58.9067)(34,58.9067)(37.25,64.5019)\qbezier(41,58.9067)(41,58.9067)(37.75,64.5019)
\put(45,51.9615){\circle{1}}\put(52.5,51.9615){\circle{1}}\put(60,51.9615){\circle{1}}\put(48.75,58.4567){\circle{1}}\put(56.25,58.4567){\circle{1}}\put(52.5,64.9519){\circle{1}}\put(45.5,51.9615){\line(1,0){6.5}}\put(53,51.9615){\line(1,0){6.5}}\put(49.25,58.4567){\line(1,0){6.5}}\qbezier(45.25,52.4115)(45.25,52.4115)(48.5,58.0067)\qbezier(52.25,52.4115)(52.25,52.4115)(49,58.0067)\qbezier(52.75,52.4115)(52.75,52.4115)(56,58.0067)\qbezier(59.75,52.4115)(59.75,52.4115)(56.5,58.0067)\qbezier(49,58.9067)(49,58.9067)(52.25,64.5019)\qbezier(56,58.9067)(56,58.9067)(52.75,64.5019)
\put(37.5,64.9519){\circle{1}}\put(45,64.9519){\circle{1}}\put(52.5,64.9519){\circle{1}}\put(41.25,71.4471){\circle{1}}\put(48.75,71.4471){\circle{1}}\put(45,77.9423){\circle{1}}\put(38,64.9519){\line(1,0){6.5}}\put(45.5,64.9519){\line(1,0){6.5}}\put(41.75,71.4471){\line(1,0){6.5}}\qbezier(37.75,65.4019)(37.75,65.4019)(41,70.9971)\qbezier(44.75,65.4019)(44.75,65.4019)(41.5,70.9971)\qbezier(45.25,65.4019)(45.25,65.4019)(48.5,70.9971)\qbezier(52.25,65.4019)(52.25,65.4019)(49,70.9971)\qbezier(41.5,71.8971)(41.5,71.8971)(44.75,77.4923)\qbezier(48.5,71.8971)(48.5,71.8971)(45.25,77.4923)
\put(60,51.9615){\circle{1}}\put(67.5,51.9615){\circle{1}}\put(75,51.9615){\circle{1}}\put(63.75,58.4567){\circle{1}}\put(71.25,58.4567){\circle{1}}\put(67.5,64.9519){\circle{1}}\put(60.5,51.9615){\line(1,0){6.5}}\put(68,51.9615){\line(1,0){6.5}}\put(64.25,58.4567){\line(1,0){6.5}}\qbezier(60.25,52.4115)(60.25,52.4115)(63.5,58.0067)\qbezier(67.25,52.4115)(67.25,52.4115)(64,58.0067)\qbezier(67.75,52.4115)(67.75,52.4115)(71,58.0067)\qbezier(74.75,52.4115)(74.75,52.4115)(71.5,58.0067)\qbezier(64,58.9067)(64,58.9067)(67.25,64.5019)\qbezier(71,58.9067)(71,58.9067)(67.75,64.5019)
\put(75,51.9615){\circle{1}}\put(82.5,51.9615){\circle{1}}\put(90,51.9615){\circle{1}}\put(78.75,58.4567){\circle{1}}\put(86.25,58.4567){\circle{1}}\put(82.5,64.9519){\circle{1}}\put(75.5,51.9615){\line(1,0){6.5}}\put(83,51.9615){\line(1,0){6.5}}\put(79.25,58.4567){\line(1,0){6.5}}\qbezier(75.25,52.4115)(75.25,52.4115)(78.5,58.0067)\qbezier(82.25,52.4115)(82.25,52.4115)(79,58.0067)\qbezier(82.75,52.4115)(82.75,52.4115)(86,58.0067)\qbezier(89.75,52.4115)(89.75,52.4115)(86.5,58.0067)\qbezier(79,58.9067)(79,58.9067)(82.25,64.5019)\qbezier(86,58.9067)(86,58.9067)(82.75,64.5019)
\put(67.5,64.9519){\circle{1}}\put(75,64.9519){\circle{1}}\put(82.5,64.9519){\circle{1}}\put(71.25,71.4471){\circle{1}}\put(78.75,71.4471){\circle{1}}\put(75,77.9423){\circle{1}}\put(68,64.9519){\line(1,0){6.5}}\put(75.5,64.9519){\line(1,0){6.5}}\put(71.75,71.4471){\line(1,0){6.5}}\qbezier(67.75,65.4019)(67.75,65.4019)(71,70.9971)\qbezier(74.75,65.4019)(74.75,65.4019)(71.5,70.9971)\qbezier(75.25,65.4019)(75.25,65.4019)(78.5,70.9971)\qbezier(82.25,65.4019)(82.25,65.4019)(79,70.9971)\qbezier(71.5,71.8971)(71.5,71.8971)(74.75,77.4923)\qbezier(78.5,71.8971)(78.5,71.8971)(75.25,77.4923)
\put(45,77.9423){\circle{1}}\put(52.5,77.9423){\circle{1}}\put(60,77.9423){\circle{1}}\put(48.75,84.4375){\circle{1}}\put(56.25,84.4375){\circle{1}}\put(52.5,90.9327){\circle{1}}\put(45.5,77.9423){\line(1,0){6.5}}\put(53,77.9423){\line(1,0){6.5}}\put(49.25,84.4375){\line(1,0){6.5}}\qbezier(45.25,78.3923)(45.25,78.3923)(48.5,83.9875)\qbezier(52.25,78.3923)(52.25,78.3923)(49,83.9875)\qbezier(52.75,78.3923)(52.75,78.3923)(56,83.9875)\qbezier(59.75,78.3923)(59.75,78.3923)(56.5,83.9875)\qbezier(49,84.8875)(49,84.8875)(52.25,90.4827)\qbezier(56,84.8875)(56,84.8875)(52.75,90.4827)
\put(60,77.9423){\circle{1}}\put(67.5,77.9423){\circle{1}}\put(75,77.9423){\circle{1}}\put(63.75,84.4375){\circle{1}}\put(71.25,84.4375){\circle{1}}\put(67.5,90.9327){\circle{1}}\put(60.5,77.9423){\line(1,0){6.5}}\put(68,77.9423){\line(1,0){6.5}}\put(64.25,84.4375){\line(1,0){6.5}}\qbezier(60.25,78.3923)(60.25,78.3923)(63.5,83.9875)\qbezier(67.25,78.3923)(67.25,78.3923)(64,83.9875)\qbezier(67.75,78.3923)(67.75,78.3923)(71,83.9875)\qbezier(74.75,78.3923)(74.75,78.3923)(71.5,83.9875)\qbezier(64,84.8875)(64,84.8875)(67.25,90.4827)\qbezier(71,84.8875)(71,84.8875)(67.75,90.4827)
\put(52.5,90.9327){\circle{1}}\put(60,90.9327){\circle{1}}\put(67.5,90.9327){\circle{1}}\put(56.25,97.4279){\circle{1}}\put(63.75,97.4279){\circle{1}}\put(60,103.923){\circle{1}}\put(53,90.9327){\line(1,0){6.5}}\put(60.5,90.9327){\line(1,0){6.5}}\put(56.75,97.4279){\line(1,0){6.5}}\qbezier(52.75,91.3827)(52.75,91.3827)(56,96.9779)\qbezier(59.75,91.3827)(59.75,91.3827)(56.5,96.9779)\qbezier(60.25,91.3827)(60.25,91.3827)(63.5,96.9779)\qbezier(67.25,91.3827)(67.25,91.3827)(64,96.9779)\qbezier(56.5,97.8779)(56.5,97.8779)(59.75,103.473)\qbezier(63.5,97.8779)(63.5,97.8779)(60.25,103.473)

\put(60,103.923){\circle*{2}}\put(60,90.9327){\circle*{2}}
\put(45,64.9519){\circle*{2}}
\put(75,64.9519){\circle*{2}}

\put(30,38.9711){\circle*{2}}
\put(15,12.9904){\circle*{2}}\put(0,0) {\circle*{2}}
\put(45,12.9904){\circle*{2}}

\put(90,38.9711){\circle*{2}}
\put(75,12.9904){\circle*{2}}
\put(105,12.9904){\circle*{2}}\put(120,0){\circle*{2}}

\end{picture} 

\caption{A largest outer mutual-visibility set of
Sierpi\'nski triangle graph $ST_3^4$} \label{outer_fig}
\end{figure}
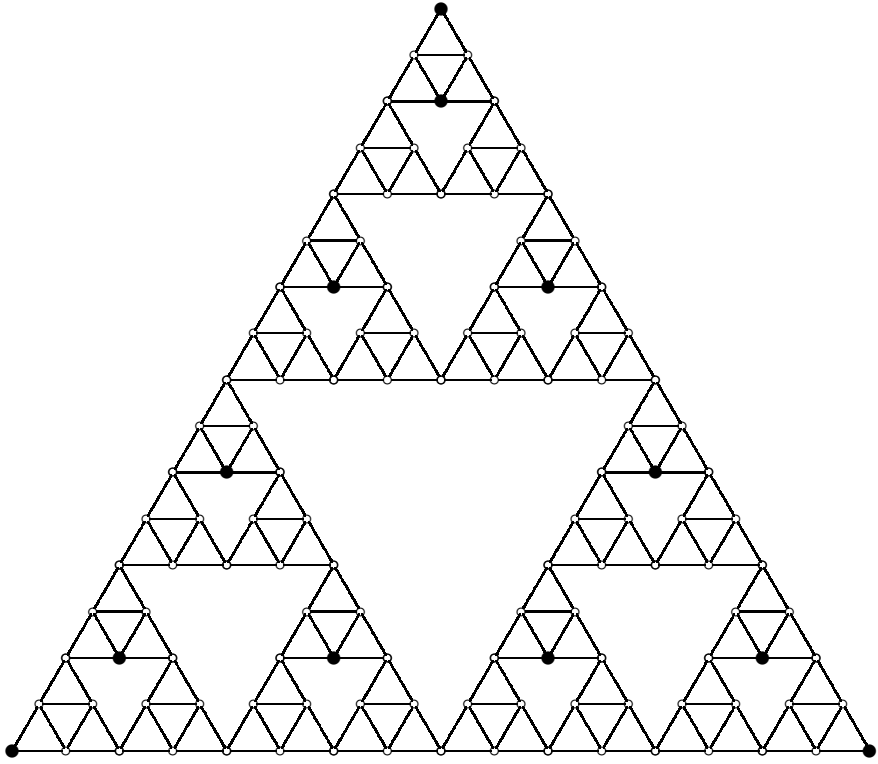

\begin{thm}
If $n \ge 1$, then 
\begin{displaymath}
\mu_o(ST_3^n) =
        \left \{ \begin{array}{llll}
              3,  &  n = 1   \\
              3^{n-2}+3,  &  {\rm otherwise}   \\
             \end{array}. \right.
\end{displaymath}
\end{thm}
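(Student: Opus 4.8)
The upper bound is exactly Proposition \ref{zgornjaO}, so it remains to produce, for every $n\ge 1$, an outer mutual-visibility set of the stated size. For $n=1$ I would take $M=X(ST_3^1)$: by Proposition \ref{razdalje} the three extreme vertices are pairwise at distance $2$, joined by shortest paths through a single non-extreme vertex, and each extreme vertex also reaches every non-extreme vertex along a shortest path whose interior avoids $X(ST_3^1)$; hence every pair needed for outer visibility is $M$-visible and $|M|=3$.

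For $n\ge 2$ the plan is to pick exactly one proper vertex from every copy of $ST_3^2$ and to adjoin the three extreme vertices of the whole graph,
\[
M=\Big(\bigcup_{H_2\in{\cal H}_2^n}\{w_{H_2}\}\Big)\cup X(ST_3^n),\qquad w_{H_2}\in P_{H_2},
\]
as illustrated in Fig. \ref{outer_fig}. Since $|{\cal H}_2^n|=3^{n-2}$ and the extreme vertices of $ST_3^n$ are corners, not proper vertices, of the copies that contain them, these contributions are disjoint and $|M|=3^{n-2}+3$. Furthermore $M$ is contained in the set $(\bigcup_{H_2}P_{H_2})\cup X(ST_3^n)$ that Theorem \ref{mutual} exhibits as a mutual-visibility set, and any subset of a mutual-visibility set is again one; thus every pair of vertices of $M$ is already $M$-visible and only the outer requirement remains. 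For $n=2$ the single copy carries all three extreme vertices together with one proper vertex, and this four-element set is checked directly to be outer.

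For $n\ge 3$ the core of the argument is a local claim: in every copy $H_2$, each vertex is $M$-visible to both connecting corners of $H_2$ (the corners it shares with an adjacent copy), and the two connecting corners are $M$-visible to one another. For a copy meeting $M$ in a single proper vertex this is precisely the $H_2$-outer-proper property of Proposition \ref{properO}. For a corner copy, where $M\cap V(H_2)=\{g,w\}$ with $g$ extreme and $w$ proper, Proposition \ref{properO} no longer applies, so I would verify the claim directly on $ST_3^2$: since $g$ has degree $2$ it is never a connecting corner, and the left--right symmetry of $ST_3^2$ fixing $g$ and $w$ lets one route every vertex to either connecting corner along a shortest path avoiding $\{g,w\}$, the proper vertex $w$ being circumvented through the neighbouring sub-triangle. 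I expect this to be the main obstacle, as it is exactly where the outer set is forced to be sparser than the mutual-visibility set.

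Granting the local claim, I would finish by concatenation. For $u\in M$ and $v\in\overline M$ a shortest $u,v$-path runs through a sequence of copies of $ST_3^2$ glued along connecting corners, and by Proposition \ref{razdalje} (compare Proposition \ref{razdalje2}) its length splits additively over these corners. Inside each traversed copy I would substitute the $M$-free shortest segment provided by the local claim: from $u$ to the exit corner in the first copy, from connecting corner to connecting corner in each intermediate copy, and from the entry corner to $v$ in the last copy. Their concatenation is an $M$-free path of length $d(u,v)$, hence an $M$-free shortest $u,v$-path. Therefore $M$ is an outer mutual-visibility set of size $3^{n-2}+3$, matching the upper bound and completing the proof.
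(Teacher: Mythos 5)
Your proposal is correct and follows essentially the same route as the paper: the identical construction (one proper vertex per copy of $ST_3^2$ together with $X(ST_3^n)$), the upper bound from Proposition \ref{zgornjaO}, and verification of outer visibility via the outer-proper property of Proposition \ref{properO} combined with routing shortest paths through the corners of the traversed copies. Your explicit treatment of the corner copies (where an extreme vertex of $ST_3^n$ sits alongside the chosen proper vertex) and the subset-of-a-mutual-visibility-set shortcut for pairs inside $M$ are simply more detailed renderings of steps the paper treats as immediate.
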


\begin{proof}
Remind that we show in Proposition \ref{zgornjaO} that 
$\mu_o(ST_3^n) \le 3^{n-2}+3$. Hence,  
we have to show that there exists an outer mutual-visibility set of $ST_3^n$ with 
$3^{n-2}+3$ vertices for every $n\ge 1$. 
Since it is not difficult to construct  an outer visibility set with 3  vertices of $ST_3^1$, let assume that $n\ge 2$.
We now construct an outer mutual-visibility set $M$ of $ST_3^n$ such that 
for every copy of $ST_3^2$, say $H_2$, in $ST_3^n$  we insert exactly one proper vertex of $H_2$. 
Moreover, we also insert all three extreme vertices 
of $ST_3^n$ in $M$.
Clearly, $M$ contains exactly  $3^{n-2}+3$ vertices. 

We now show that $M$ is an outer mutual-visibility set of $ST_3^n$, i.e., we have to show that 
every $v \in V(ST_3^n)$ and every $u \in M$ are $M$-visible. 
If $u$ and $v$ belong to the same copy of $ST_3^2$, it is not difficult to see that this claim holds. Let then $u$ and $v$  belong to different copies of $ST_3^2$, say $H_2$ and $H_2'$, 
respectively. If none of $H_2$ and $H_2'$ contains an extreme vertex of $ST_3^n$,  then  $M$ is $H_2$- and $H_2'$-outer-proper. By Proposition \ref{properO} it holds that every $u \in V(H_2)$ (resp. $v \in V(H_2')$)
and every extreme vertex $p_i$ of $H_2$ (resp. extreme vertex $p_i'$ of $H'$), $i \in [3]_0$,  are $M$-visible. 
Remind also that every two extreme vertices of a copy of $ST_3^2$
that does contain an extreme vertex of $ST_3^n$
are $M$ visible. 
Since every shortest $u,v$-path contains $p_i$ for some $i \in [3]_0$
and  $p_j'$ for some $j \in [3]_0$, 
it follows that $u$ and $v$ are  $M$-visible. 

If $H_2$ and $H_2'$ contains an extreme vertex of $ST_3^n$, then note that  every $u,v$-path contains $p_i$ for some $i \in [3]_0$ and  $p_j'$ for some $j \in [3]_0$ such
that $p_i$ and $p_j'$ are not extreme vertices of $ST_3^n$. Moreover, 
$u$ and  $p_i$ (resp. $v$ and $p_j'$) are clearly $M$-visible. 
Since it follows that $u$ and $v$ are  $M$-visible, the proof is complete.
\end{proof}

In order to study the total mutual-visibility sets of  Sierpi\'nski  triangle graphs, we need the following result. 

\begin{clm} \label{cl}
If $n\ge 1$ and $u$ is a non-extreme vertex of $ST_3^n$, then 
$ST_3^n$ admits two  vertices, say $v$ and $w$, such that $I(v,w)=\{u\}$.
\end{clm}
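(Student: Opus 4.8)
The plan is to reduce the statement to a purely local assertion: it suffices to exhibit two neighbours $v,w$ of $u$ that are non-adjacent and whose only common neighbour is $u$, since then $v$–$u$–$w$ is the unique shortest $v,w$-path and its only internal vertex is $u$, which is exactly $I(v,w)=\{u\}$ as claimed. Because $u$ is non-extreme it has degree $4$ and is the common apex of exactly two ``upward'' unit triangles $\{u,a,b\}$ and $\{u,c,d\}$, so $N(u)=\{a,b,c,d\}$; the whole difficulty is to pick the two witnesses among these neighbours correctly. To do this I would first locate, by descending the iterative construction of $ST_3^n$, the \emph{largest} copy $H\cong ST_3^m$ (with $1\le m\le n$) in which $u$ is a gluing vertex, i.e. $\{u\}=V(H_1)\cap V(H_2)$ for two adjacent sub-copies $H_1,H_2\cong ST_3^{m-1}$ of $H$. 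Such an $H$ exists: as $u$ is not an extreme vertex of $ST_3^n$, at each level $u$ is either already a gluing vertex (and the descent stops) or lies in the interior of one sub-copy, in which it is again non-extreme; the recursion must therefore reach a copy of $ST_3^1$ in which $u$ is an inner vertex, and hence a junction of two unit triangles.

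I would then split into two cases according to $m$. If $m\ge 2$, take $v$ and $w$ to be neighbours of $u$ lying in $H_1$ and $H_2$ respectively. Since $u$ is a corner of $H_1\cong ST_3^{m-1}$ with $m-1\ge 1$, these neighbours are \emph{non-extreme} vertices of $H_1$ and $H_2$, and such vertices are shared with no other copy; hence $N(v)\subseteq V(H_1)$ and $N(w)\subseteq V(H_2)$. As $V(H_1)\cap V(H_2)=\{u\}$ and no new edges are introduced between vertices of a single copy, this forces $v\not\sim w$ and $N(v)\cap N(w)=\{u\}$. If $m=1$, then $u$ is an inner vertex of a copy $G\cong ST_3^1$, and I take $v,w$ to be the two corners of $G$ adjacent to $u$ (each inner vertex of $ST_3^1$ is adjacent to exactly two of its three corners). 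A direct inspection of $ST_3^1$ gives $v\not\sim w$ and shows that $u$ is their unique common neighbour \emph{inside} $G$; the only way $v$ or $w$ can acquire further neighbours in $ST_3^n$ is by being a gluing vertex, but those extra neighbours lie in copies meeting $G$ only in $v$, respectively $w$, so they are not adjacent to the other witness and cannot become a second common neighbour.

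In either case $v\not\sim w$ and $N(v)\cap N(w)=\{u\}$, while $d(v,w)=2$ with no shorter route through another copy (here Proposition~\ref{razdalje} rules out shortcuts, every detour through a neighbouring copy being longer), so $v$–$u$–$w$ is the unique shortest $v,w$-path and $I(v,w)=\{u\}$. I expect the genuinely delicate point to be the case $m=1$: there the witnesses may themselves be gluing vertices of the whole graph and so carry neighbours outside $G$, and the crux of the argument is to confirm that these external neighbours, which sit in different sub-copies sharing only the relevant corner with $G$, never produce a second common neighbour of $v$ and $w$.
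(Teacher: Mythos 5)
Your proposal is correct and follows essentially the same route as the paper: the paper's proof also splits into the two cases you identify (either $u$ is a non-extreme vertex of a unit copy of $ST_3^1$, where one takes the two adjacent extreme vertices of that copy, or $u$ is a common vertex of two copies, where one takes a neighbour of $u$ in each copy), with the same witnesses $v,w$ in both cases. The only difference is presentational: you descend to the largest level $m$ at which $u$ is a gluing vertex and verify the ``unique common neighbour'' claims in detail, whereas the paper works directly with unit copies and dismisses those verifications as immediate.
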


\begin{proof}
If  $u$ is non-extreme vertex  of a copy of $V(ST_3^1)$, say $H$, in $V(ST_3^n)$, then let $v$ and $w$ be the extreme vertices of $H$ adjacent to $u$ and the  claim readily follows.   
Otherwise, $u$ is a common vertex of two copies of $V(ST_3^1)$ in $V(ST_3^n)$, say 
 $H$ and $H'$. If $v$ (resp. $w$) is a vertex of $V(H)$ (resp. $V(H')$) adjacent to $u$,  then obviously $u$ is the only vertex on a shortest $v,w$-path. 
\end{proof}

\begin{thm} \label{t_thm}
If $n\ge 1$, then $$\mu_t(ST_3^n) = 3.$$
Moreover,  $M$ is the largest total mutual-visibility set of $ST_3^n$ if and only if $M=X(ST_3^n)$.
\end{thm}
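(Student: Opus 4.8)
The plan is to prove the two directions separately: that $X(ST_3^n)$ is a total mutual-visibility set (giving $\mu_t \ge 3$), and that \emph{every} total mutual-visibility set is contained in $X(ST_3^n)$ (giving $\mu_t \le 3$ together with the uniqueness statement).

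First I would verify the lower bound. The central structural fact I would isolate is that \emph{no shortest path between two distinct vertices of $ST_3^n$ passes through an extreme vertex as an interior vertex}. To see this, recall that each extreme vertex $p$ has degree $2$, and in the iterative construction $p$ is a corner of a unique smallest triangle (a copy of $ST_3^0 = K_3$); its two neighbours $a,b$ are the remaining vertices of that triangle and are therefore adjacent. Hence any occurrence of the subpath $a - p - b$ on a path could be replaced by the edge $ab$, contradicting minimality. Consequently, for every pair $u,v \in V(ST_3^n)$, every shortest $u,v$-path meets $X(ST_3^n)$ at most at its endpoints, so $u$ and $v$ are $X(ST_3^n)$-visible. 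This shows that $X(ST_3^n)$ is a total mutual-visibility set and hence $\mu_t(ST_3^n) \ge 3$.

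Next I would prove that any total mutual-visibility set $M$ satisfies $M \subseteq X(ST_3^n)$, which yields both the upper bound and the uniqueness claim at once. Suppose $M$ contained a non-extreme vertex $u$. By Claim \ref{cl} there exist vertices $v,w$ with $I(v,w) = \{u\}$, so that $u$ is an interior vertex of every shortest $v,w$-path. Since $u \in M \setminus \{v,w\}$, the pair $v,w$ has no $M$-free shortest path, contradicting the defining property of a total mutual-visibility set. Hence $M$ contains only extreme vertices, so $M \subseteq X(ST_3^n)$ and $|M| \le 3$. Combined with the lower bound this gives $\mu_t(ST_3^n) = 3$; and since $|X(ST_3^n)| = 3$, the set $X(ST_3^n)$ is the only total mutual-visibility set attaining this size.

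The upper bound is essentially immediate once Claim \ref{cl} is in hand, so the only step requiring genuine care is the lower bound: I would need to confirm that the ``corner is never interior on a shortest path'' argument holds uniformly across the recursive construction (in particular checking the small cases such as $n = 1$), since this is what guarantees that the three extreme vertices never obstruct visibility of any pair. I expect this to be the main, though modest, obstacle.
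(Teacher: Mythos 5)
Your proposal is correct and its core coincides with the paper's proof: both obtain the upper bound and the uniqueness claim by invoking Claim~\ref{cl} to show that no non-extreme vertex can lie in a total mutual-visibility set, since the two witnesses $v,w$ with interval $\{u\}$ would fail to be $M$-visible. The one substantive difference is that you also prove the lower bound --- that $X(ST_3^n)$ really is a total mutual-visibility set --- via the observation that an extreme vertex has degree $2$ with adjacent neighbours, so it can never be an interior vertex of any shortest path; the paper's proof omits this verification entirely and simply asserts that the three extreme vertices form the largest such set, so your write-up is in this respect more complete than the published argument.
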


\begin{proof}
Let $M$ be a total mutual-visibility set of $ST_3^n$. 
By Claim \ref{cl}, the vertex $u \in M $ cannot be a non-extreme vertex of  $V(ST_3^n)$, since otherwise there exist two neighbours of that $u$ are not $M$-visible.  
It follows that the largest total mutual-visibility set of  $ST_3^n$ contains all three extreme vertices of  $ST_3^n$. 
\end{proof}

The dual mutual-visibility numbers of  Sierpi\'nski  triangle graphs 
are given in the next result.

\begin{thm} \label{dmv_thm}
If $n\ge 1$, then 
\begin{displaymath}
\mu_d(ST_3^n) =
        \left \{ \begin{array}{llll}
              3,  &  n = 1   \\
              4,  &  {\rm otherwise}   \\
             \end{array}. \right.
\end{displaymath}
\end{thm}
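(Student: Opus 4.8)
The plan is to prove matching lower and upper bounds, with $n=1$ as a genuine base case and an induction on $n$ for $n\ge 2$. For the lower bound I exhibit explicit dual sets. When $n=1$ the set $X(ST_3^1)$ works: its three extreme vertices are pairwise visible along the sides, while the three non-extreme vertices are pairwise adjacent and hence trivially visible, so $\mu_d\ge 3$. For $n\ge 2$ I fix two extreme vertices $a_1,a_2$, call $a_3$ the third, and inside the copy of $ST_3^1$ containing $a_i$ ($i=1,2$) let $y_i$ be the neighbour of $a_i$ lying on the $a_ia_3$-side (the one pointing away from the other chosen extreme); set $M=\{a_1,a_2,y_1,y_2\}$. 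I would check directly that $M$ is a mutual-visibility set — the (forced, unique) $a_1a_2$-side geodesic avoids $y_1,y_2$, and the remaining pairs admit $M$-free geodesics — and that every complement pair is $M$-visible. The only interior $M$-vertices $y_1,y_2$ force geodesics only toward $a_1,a_2\in M$, and for $n\ge 2$ they are far apart with no common neighbour, so they never jointly block a complement pair. This is exactly what fails for $n=1$, where $y_1,y_2$ are both adjacent to $a_3$ and jointly block the pair formed by $a_3$ and the $a_1a_2$-side midpoint; this explains the drop from $4$ to $3$.

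For the upper bound the central tool is a restriction lemma: since each sub-copy $H\cong ST_3^m$ is convex (geodesics between two of its vertices stay inside $H$, a standard distance fact, cf. \cite{zemlja}), the trace $M\cap V(H)$ of any dual set $M$ is again a dual set of $H$, because both the visibility of $M\cap V(H)$ and of its complement are inherited from witnessing geodesics that lie in $H$. Using this I would show that \emph{no} vertex $g$ which is a shared corner of two copies of $ST_3^k$ with $k\ge 1$ can lie in $M$. Such a $g$ lies on every geodesic joining the interiors of the two copies it separates (an analogue of Proposition \ref{razdalje2}), so if $g\in M$ then no complement–complement pair may straddle $g$; this forces the entire interior of one of those two $ST_3^k$-copies $K$ into $M$, whence $|M\cap V(K)|\ge \frac{3^{k+1}-1}{2}>\mu_d(ST_3^k)$, contradicting the restriction lemma together with the inductive value of $\mu_d(ST_3^k)$ (for $k=1$ this reads $4>3$, and for $k\ge 2$ it reads $\ge 13>4$).

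With every such gluing vertex excluded, $M$ is confined to the three extreme vertices together with the midpoints of the copies of $ST_3^1$, and a local analysis of one such copy $H$ with corners $a,b,c$ completes the count. Each corner pair of $H$ has a unique length-$2$ geodesic through the opposite midpoint, so that midpoint being in $M$ blocks the pair; hence a copy all of whose corners are (excluded) gluing vertices can contribute no midpoint to $M$, and in a copy containing an extreme vertex a midpoint may lie in $M$ only if the incident extreme does, while the midpoint opposite that extreme is always excluded. Finally, mutual visibility of $M$ forces that whenever two extremes $a_i,a_j\in M$ the two midpoints on the $a_ia_j$-side leave $M$ (each is the forced first step out of an extreme along that side). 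A short case analysis on $|M\cap X(ST_3^n)|$ then gives $|M|\le 4$: three extremes expel all midpoints (size $3$), a single extreme admits at most its two incident midpoints (size $3$), and exactly two extremes admit at most one suitable midpoint apiece (size $4$, realised by the construction above), while zero extremes force $M=\emptyset$.

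The main obstacle is the gluing-vertex exclusion of the second paragraph: a naive per-subcopy application of the restriction lemma only yields $|M|\le 3\,\mu_d(ST_3^{n-1})$, and it is precisely the impossibility of placing a gluing vertex in $M$ that collapses this to the constant $4$. Making that step rigorous depends on the "forced first step / unique corridor" distance facts, for which I would lean on Propositions \ref{razdalje} and \ref{razdalje2}. The base cases I would settle as the authors do elsewhere: the $n=1$ upper bound by the finite check that every $4$-element mutual-visibility set of $ST_3^1$ has a two-vertex complement that is not $M$-visible, and (if a direct start of the induction is preferred) the smallest values of $n$ by the SAT/ILP computation of Subsection 2.3.
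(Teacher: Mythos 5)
Your overall shape (an explicit $4$-element set of two extreme vertices plus one neighbour of each for the lower bound, and an upper bound confining $M$ to the extreme vertices and nearby midpoints) matches the paper, and your convexity-based restriction lemma is itself correct. However, the step you yourself flag as the crux — excluding every gluing vertex from $M$ — rests on a claim that is simply false, not merely unproven. You assert that a shared corner $g$ of two copies $K_1,K_2$ of $ST_3^k$ lies on \emph{every} geodesic joining interior vertices of $K_1$ and $K_2$. Already in $ST_3^2$ this fails: let $H_1,H_2$ be the two bottom copies of $ST_3^1$ with $V(H_1)\cap V(H_2)=\{r\}$, let $p=V(H_0)\cap V(H_1)$ and $q=V(H_0)\cap V(H_2)$ for the top copy $H_0$, and take $u$ to be the midpoint of the side of $H_1$ joining $p$ to the extreme vertex of $ST_3^n$ in $H_1$, and $v$ the analogous midpoint in $H_2$. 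Then $d(u,v)=4$, and besides the geodesic through $r$ there is a second geodesic $u\,p\,m\,q\,v$ (with $m$ the midpoint of the $p,q$-side of $H_0$) that avoids $r$ entirely. Consequently a complement--complement pair \emph{can} straddle $g\in M$ and still be $M$-visible, so nothing forces the interior of $K_1$ or $K_2$ into $M$, and the contradiction with the restriction lemma never materialises. Proposition \ref{razdalje2} cannot repair this: it concerns a copy of $ST_3^{n-1}$ together with an \emph{adjacent smaller} copy of $ST_3^{n-2}$, and its one-sided conclusion does not give the two-sided "corridor" property you need for two equal-sized copies. (There is also a minor arithmetic slip: the interior of $ST_3^k$ has $(3^{k+1}-3)/2$ vertices, so for $k=1$ your inequality would read $3>3$ unless you count $g$ itself.)

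What \emph{is} true, and what the paper uses instead, is a purely local fact: for the two neighbours $a\in V(K_1)$, $b\in V(K_2)$ of $g$ lying in different copies, $g$ is the unique interior vertex of $I(a,b)$. From this one only gets that each such cross pair $\{a,b\}$ contains exactly one vertex of $M$ (both in $M$ or both outside $M$ would violate duality), and running over the four cross pairs this forces a triangle $\{g,a_1,a_2\}$ or $\{g,b_1,b_2\}$ inside $M$. The paper then kills this with a separate observation that $M$ cannot contain a triangle far from the extreme vertices. So your gluing-vertex exclusion needs to be replaced by this neighbour/no-triangle argument; the restriction-lemma counting you propose cannot be made to work as stated. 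The rest of your outline (the lower-bound set, which coincides with the paper's $\{p_i,u,p_k,v\}$, and the final case analysis on $|M\cap X(ST_3^n)|$) is sound modulo that repair.
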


\begin{proof}
Let $M$ be a dual mutual-visibility set of $ST_3^n$. 

If \( n = 1 \), suppose that \( M \) is a set with 4 vertices. Let \( u \) and \( v \) be vertices in \( V(ST_3^1) \setminus M \). Since \( u \) and \( v \) must be adjacent (otherwise, they would not be \( M \)-visible), there must exist two extreme vertices, say \( p_i \) and \( p_j \), in \( M \), along with a vertex \( w \in  M \) adjacent to both \( p_i \) and \( p_j \). Hence, 
\( I(p_i, p_j) = \{ w \} \), which leads to a contradiction.
It follows that \( M \) cannot contain 4 vertices. Since it is easy to verify that the three extreme vertices of \( ST_3^1 \) form a dual mutual-visibility set, this case is settled.

Let \( n \ge 2 \). Consider \( M \) as a dual mutual-visibility set of \( ST_3^n \) and let \( u \in M \). We will first show that \( u \) is either an extreme vertex or a vertex adjacent to an extreme vertex of \( ST_3^n \).

Assume for contradiction that \( u \) is at least distance 2 from every extreme vertex of \( ST_3^n \). Note that \( u \) and any two other vertices in \( M \) cannot induce a triangle, as this would imply the existence of two vertices adjacent to this triangle that are not \( M \)-visible, leading to a contradiction.

If \( u \) is a common vertex of two copies of \( ST_3^1 \), say \( H \) and \( H' \), then we can find vertices \( v, w \not \in M \) such that \( v \in V(H) \) and \( w \in V(H') \). Since \( v \) and \( w \) are not \( M \)-visible, we reach a contradiction.

If \( u \) is a non-extreme vertex of a copy of \( ST_3^1 \), say \( H \), in \( ST_3^n \), then the two extreme vertices of \( H \) adjacent to \( u \) cannot both be in \( M \) or both in \( V(H) \setminus M \) because they would not be \( M \)-visible. Since we have already established that an extreme vertex of \( H \) cannot be in \( M \), it follows that any vertex in \( M \) must either be an extreme vertex of \( ST_3^n \) or a vertex adjacent to an extreme vertex of \( ST_3^n \).

Let \( u \) be a vertex adjacent to an extreme vertex, say \( p_i \), of \( ST_3^n \). Note that there exists a vertex \( w \) adjacent to \( u \) such that \( I(p_i, w) = \{ u \} \). Hence, if a vertex adjacent to an extreme vertex \( p_i \) is in \( M \), then \( p_i \) must also be in \( M \).

Consider a vertex \( z \ne u \) adjacent to an extreme vertex \( p_i \) of \( ST_3^n \). If \( p_i, u, \) and \( z \) are all in \( M \), then the cardinality of \( M \) is 3, since no other extreme vertices of \( ST_3^n \) can be included in \( M \). 

If \( z \not \in M \), let \( p_j \) and \( p_k \) be extreme vertices of \( ST_3^n \) that are not adjacent to \( u \). Without loss of generality, assume \( u \in I(p_i, p_j) \). Clearly, \( p_j \not \in M \). Thus, the cardinality of \( M \) is at most 4.

Let \( v \) be a vertex adjacent to \( p_k \) such that \( u \not \in I(p_i, p_k) \). It is straightforward to verify that \( \{ p_i, u, p_k, v \} \) forms a dual visibility set of \( ST_3^n \). This completes the proof.

\end{proof}

\subsection*{Funding}
This work was supported by the Slovenian Research Agency under the grants P1-0297, J1-2452, J1-1693 and  J2-1731.


\end{document}